\def\E{\mathbf{E}}
\newtheorem{thm}{Theorem}
\newtheorem{prp}{Proposition}
\newtheorem{ex}{Example}
\def\sym{\mathbb{S}}
\def\R{\mathbb{R}}
\def\E{\mathbf{E}}
\def\K{\mathbb{K}}
\def\QED{~\rule[-1pt]{6pt}{6pt}\par\medskip}
\newenvironment{proof}{{\bf Proof.\ }}{ \hfill \QED}  \title{\LARGE \bf Team Decision Problems with Convex Quadratic Constraints}
\author{Ather Gattami
\thanks{Ather Gattami is with Ericsson Research,
Stockholm, Sweden. E-mail: 
ather.gattami@ericsson.com}
}
\begin{document}
\maketitle
\begin{abstract}
In this paper, we consider linear quadratic team problems with an arbitrary number of quadratic constraints in both stochastic and deterministic settings. The team consists of players with different measurements about the state of nature. The objective of the team is to minimize a quadratic cost subject to additional finite number of quadratic constraints. We first consider the problem of countably infinite number of players in the team for a bounded state of nature with a Gaussian distribution and show that linear decisions are optimal. Then, we consider the problem of team decision problems with additional convex quadratic constraints and show that linear decisions are optimal for both the finite and infinite number of players in the team. For the finite player case, the optimal linear decisions can be found by solving a semidefinite program. Finally, we consider the problem of minimizing a quadratic objective for the worst case scenario, subject to an arbitrary number of deterministic quadratic constraints. We show that linear decisions are optimal and can be found by solving a semidefinite program. Finally, we apply the developed theory on dynamic team decision problems in linear quadratic settings.
\end{abstract}
\begin{keywords}
Team Decision Theory, Stochastic, Deterministic, Game Theory, Quadratic Constraints, Convex Functional Optimization.
\end{keywords}

\section{Introduction}
We consider the problem of distributed decision
making with information constraints and quadratic constraints under linear quadratic
settings. For instance, information constraints appear naturally
when making decisions over networks. Quadratic constraints appear due to the power limited controllers in practice for instance. These problems can be
formulated as team problems. The team problem is an optimization problem
with several decision makers possessing different information aiming to optimize a common
objective. Early results in \cite{marschak:1955}  considered static team
theory in stochastic settings and a more general framework was introduced
by Radner \cite{radner}, where existence and uniqueness of solutions where shown.
Generalization to dynamic team problems for control purposes where introduced in \cite{ho:chu}. 
In \cite{didinsky:basar:1992}, the deterministic team problem
with two team members was solved. The solution can't be easily
extended to more than two players since it uses the fact that the
two members have common information; a property that doesn't
necessarily hold for more than two players. Also, a nonlinear team
problem with two team members was considered in
\cite{bernhard:99}, where one of the team members is assumed to
have full information whereas the other member has only access to
partial information about the state of the world. Related team
problems with exponential cost criterion were considered in
\cite{krainak:82}. Optimizing team problems with respect to
\textit{affine} decisions in a minimax quadratic cost was shown to
be equivalent to stochastic team problems with exponential cost,
see \cite{fan:1994}. The connection is not clear when the
optimization is carried out over nonlinear decision functions.
The deterministic version (minimizing the worst case scenario) 
of the linear quadratic team decision problem was solved in \cite{gattami:bob:rantzer}.
The problem of countably infinite number of players under the power semi-norm was
solved in \cite{mahajan:2013} under certain assumptions.

In this paper, we will consider both Gaussian and deterministic settings(worst case scenario)
for team decision problems under additional quadratic constraints. It's
well-known that additional constraints, although convex, could
give rise to complex optimization problems if the optimized variables are functions
(as opposed to real numbers). For instance, linear functions (that is functions of the 
form $\mu(x) = Kx$ where $K$ is a real matrix) are no longer optimal. We will illustrate this fact by the following example.

\begin{ex}
\label{counterex}
For $x\in \mathbb{R}$, we want to minimize the objective function 
$$|u|^2$$  
subject to $$|x-u|^2\leq \gamma$$

Some Hilbert space theory shows that the optimal $u$ is given by

$$
u=\mu(x) = (|x|-\sqrt{\gamma})x/|x| ~~ \text{if}~~  |x|^2>\gamma, 
$$
and
$$
u=\mu(x) = 0 ~~ \text{otherwise}. 
$$
Obviously, the optimal $u$ is a nonlinear function of $x$. \\
\end{ex}

Increasing the dimension of $x$, and adding constraints on the structure of $u$, for instance $x\in \mathbb{R}^N$ 
and $u=\mu(x)=(\mu(x_1), .., \mu(x_N))$, certainly makes the constrained optimization more complicated.
The example above shows that, in spite of having a convex optimization carried out 
over a Hilbert space, the optimal decision function is nonlinear. However, we show in the upcoming sections that
problems multiple quadratic constraints behave nicely when considering the \textit{expected} values of the objectives in the Gaussian case, in the sense that linear decisions are optimal. We also extend the results to the case of infinite number of players in the team. For the deterministic counterpart which is not an optimization problem over a Hilbert space, we show that linear decisions are optimal and we show how to find the linear optimal decisions by semidefinite programming. Finally, we apply the developed theory on dynamic team decision problems in linear quadratic settings.
\section{Notation}
The following table gives a list of the notation we are going to use throughout the paper:\\

\begin{tabular}{ll}
$\mathbb{R}_{+}$& The set of nonnegative real numbers.\\
$\mathbb{N}$& The set of positive integers.\\
$\mathbb{S}^n_{+}$& The set of $n\times n$ symmetric positive\\
& semidefinite matrices.\\
$\mathbb{S}^n_{++}$& The set of $n\times n$ symmetric positive\\
& definite matrices.\\
$\mathcal{M}$& The set of measurable functions.\\
$\mathcal{C}$& The set of functions
$\mu:\mathbb{R}^p\rightarrow \mathbb{R}^m$ with\\
& $\mu(y)=(\mu_1^*(y_1),\mu_2^*(y_2), ..., \mu_N^*(y_N))^*$,\\
& $\mu_i:\mathbb{R}^{p_i}\rightarrow \mathbb{R}^{m_i}$,
$\sum_{i} m_i=m$, $\sum_{i} p_i=p$.\\
$A_{i}$& The element of $A$ in row position $i$.\\
$\succeq$ & $A\succeq B$ $\Longleftrightarrow$ $A-B\in \mathbb{S}^n_{+}$.\\
$\succ$ & $A\succ B$ $\Longleftrightarrow$ $A-B\in \mathbb{S}^n_{++}$.\\
$\mathbb{K}$ 	& $\mathbb{K} = \{K| K=\mathbf{diag}(K_1, ..., K_N),$ \\
			& \hspace{30mm}$K_i\in \mathbb{R}^{m_i\times p_i}\}$.\\
$\mathbf{Tr}$& $\mathbf{Tr}\{A\}$ is the trace of the matrix $A$.\\
$\mathcal{N}(m,X)$&  The set of Gaussian variables with\\
& mean $m$ and covariance $X$.
\end{tabular}


\section{Linear Quadratic Gaussian Team Theory}
In this section, we will review a classical result in
stochastic team theory for a finite number of decision variables
and present an extension to the case of infinite number of decision variables. 

In the static team decision problem, one would like to
solve
\begin{equation}
\label{static}
\begin{aligned}
\min_{\mu}\hspace{2mm} & \mathbf{E}\left[
\begin{matrix}
x\\
u
\end{matrix}
\right]^*
\left[
\begin{matrix}
Q_{xx} & Q_{xu}\\
Q_{ux} & Q_{uu}
\end{matrix}
\right]
\left[
\begin{matrix}
x\\
u
\end{matrix}
\right]\\
\text{subject to } & y_i=C_ix+v_i\\
                   & u_i = \mu_i(y_i)\\
                   & \text{for } i=1,..., N.
\end{aligned}
\end{equation}
Here, $x$ and $v$ are independent Gaussian variables taking values in
$\mathbb{R}^n$ and $\mathbb{R}^{p}$, respectively, with
$x\sim \mathcal{N}(0,V_{xx})$ and $v\sim \mathcal{N}(0,V_{vv})$.
Also, $y_i$ and $u_i$ will be stochastic variables taking values in
$\mathbb{R}^{p_i}$, $\mathbb{R}^{m_i}$, respectively, and
$p_1+...+p_N=p$. We assume that
\begin{equation}
\left[\begin{matrix}
Q_{xx} & Q_{xu}\\
Q_{ux} & Q_{uu}
\end{matrix}\right]\in \mathbb{S}^{m+n},
\end{equation}
and $Q_{uu}\in \mathbb{S}^{m}_{++}$, $m=m_1+\cdots+m_N$.

If full state  information about $x$ is available to each
\textit{decision maker} $u_i$, the minimizing $u$ can be
found easily by completion of squares. It is given by $u=Lx$, where
$L$ is the solution to
$$Q_{uu}L=-Q_{ux}.$$
Then, the cost function in (\ref{static}) can be rewritten as
\begin{equation}
\label{cost}
\begin{aligned}
J(x,u) & = \mathbf{E}\{x^T(Q_{xx}-L^TQ_{uu}L)x\}+\\
       & \hspace{5mm}\mathbf{E}\{(u-Lx)^TQ_{uu}(u-Lx)\}.
\end{aligned}
\end{equation}
Minimizing the cost function $J(x,u)$, is equivalent to
minimizing $$\mathbf{E}\{(u-Lx)^TQ_{uu}(u-Lx)\},$$
since nothing can be done about
$\mathbf{E}\{x^T(Q_{xx}-L^TQ_{uu}L)x\}$
(the cost when $u$ has full information).

The next result is due to Radner \cite{radner}, showing that linear decision are optimal for the finite-dimensional static team problem

\begin{prp}[Radner]
\label{radner1}
Let $x$ and $v_i$ be Gaussian variables with zero mean,
taking values in $\mathbb{R}^n$ and $\mathbb{R}^{p_i}$, respectively,
with $p_1+...+p_N=p$.
Also, let $u_i$ be a stochastic variable taking values in
$\mathbb{R}^{m_i}$,
$Q_{uu}\in \mathbb{S}^{m}_{++}$, $m=m_1+\cdots+m_N$,
$L\in \mathbb{R}^{m\times n}$, $C_i\in \mathbb{R}^{p_i\times n}$,
for $i=1, ..., N$.
Then, the optimal decision $\mu$ to the optimization problem
\begin{equation}
\label{opt1}
\begin{aligned}
\min_{\mu}\hspace{3mm}   & \mathbf{E}\{(u-Lx)^*Q_{uu}(u-Lx)\}\\
\textup{subject to}\hspace{3mm} & y_i=C_ix+v_i\\
                   & u_i = \mu_i(y_i)\\
                   & \text{for } i=1,..., N.
\end{aligned}
\end{equation}
is unique and linear in $y$.\\
\end{prp}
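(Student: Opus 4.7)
The plan is to exploit strict convexity of the objective together with the joint Gaussianity of $(x,y)$ in order to reduce the infinite-dimensional optimization over measurable decisions to a finite-dimensional problem over the block-diagonal matrix class $\mathbb{K}$.

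Since $Q_{uu}\succ 0$, the functional $J(\mu)=\E\{(u-Lx)^*Q_{uu}(u-Lx)\}$ is strictly convex on the space of square-integrable measurable $\mu$ (on which it is finite), so any stationary point is automatically the unique global optimum and the uniqueness claim comes for free. First I would compute the Gateaux derivative along a perturbation of the $i$-th component by an arbitrary square-integrable test function $\eta_i(y_i)$; setting it to zero and varying $\eta_i$ over $L^2(y_i)$ produces the stationarity conditions
$$\E\{Q_{uu}^{(i)}(u-Lx)\mid y_i\} = 0\text{ a.s.},\quad i=1,\ldots,N,$$
where $Q_{uu}^{(i)}$ denotes the $i$-th block row of $Q_{uu}$.

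The crucial observation is that because $(x,y_1,\ldots,y_N)$ is jointly zero-mean Gaussian, each conditional expectation $\E[x\mid y_i]$ and $\E[y_j\mid y_i]$ is \emph{linear} in $y_i$, with coefficients built out of $V_{xx}$, $V_{vv}$, and the $C_j$. Substituting the linear ansatz $\mu_j(y_j)=K_jy_j$ into the stationarity conditions therefore yields, after matching coefficients in $y_i$, an algebraic linear system in the block-diagonal matrix $K\in\mathbb{K}$. To show this system is consistent, I would alternatively restrict $J$ directly to $\mathbb{K}$: the restricted functional is a convex quadratic in $K$ and attains its minimum at some $K^\star$ whose first-order condition matches the system above. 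Strict convexity of $J$ on the full measurable space then forces $\mu^\star(y)=K^\star y$ to be the unique global minimizer.

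The main obstacle is handling possible degeneracy when making the last step rigorous. If $\E[yy^*]$ is singular, the Hessian of the restricted quadratic is only positive semidefinite and $K^\star$ need not be unique as a matrix, though the induced decision $u^\star=K^\star y$ is still unique almost surely, which is what the proposition asserts. I would also need to verify carefully that the stationarity condition, derived from possibly nonlinear test directions $\eta_i$, is actually satisfied by a linear $\mu^\star$; this reduces to the Gaussian orthogonality property $\E\{(x-\E[x\mid y_i])\psi(y_i)\}=0$ for any measurable $\psi$, which is precisely what allows the restriction from all measurable decisions to $\mathbb{K}$ without loss of optimality.
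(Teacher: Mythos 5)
Your argument is correct and is essentially the same as the one the paper uses (for the infinite-dimensional generalization in Theorem~\ref{infradner}; Proposition~\ref{radner1} itself is only cited to Radner): minimize over the linear class, then use the fact that the residual $[Q_{uu}(K^\star y-Lx)]_i$ is jointly Gaussian with and uncorrelated with $y_i$, hence independent of it, so the orthogonality extends from linear to all measurable test functions, and convexity (completion of squares in the paper, your Gateaux stationarity condition here) upgrades this to global optimality and uniqueness. The only cosmetic difference is that you phrase the sufficiency step variationally rather than as an explicit Hilbert-space projection, and your closing orthogonality identity should be stated for the residual $[Q_{uu}(K^\star y-Lx)]_i$ rather than for $x-\mathbf{E}[x\mid y_i]$, but the mechanism is identical.
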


\begin{proof}
For a proof, see \cite{radner}.
\end{proof}

It's not clear how to extend the result above to the case of infinite number of state and decision
variables, that is $N = \infty$. This is an important case to approach dynamic team problems, where the decision variables are over space and \textit{time}, and
the time horizon that goes to infinity. 

The next theorem establishes a generalization of the above proposition for the infinite-dimensional case. 

\begin{thm}
\label{infradner}
Let $x = (x_1^* ~ x_2^* ~ ...)^* $ and $v = (v_1^*  ~ v_2^* ~ ...)^*$ be infinite-dimenational vectors where
$x_i$ is Gaussian taking values in $\mathbb{R}^{n_i}$ and $v_i$ is Gaussian taking values in $\mathbb{R}^{p_i}$.
Suppose that
$
\E\{x^*x\}, \E\{v^*v\}  < \infty.
$
Also, let $u = (u_1^*~ u_2^*~ ...)^* $ be a random infinite-dimensional vector with $u_i$ taking values in
$\mathbb{R}^{m_i}$, 
$Q_{uu}$ an infinite dimensional, bounded, positive definite, self-adjoint linear operator, 
$L$ a bounded linear operator, and $C_i\in \mathbb{R}^{p_i\times \infty}$ a bounded linear operator, 
for all $ i\in \mathbb{N}$.
Then, the optimal decision $\mu$ to the optimization problem
\begin{equation}
\label{opt1}
\begin{aligned}
\inf_{\mu}\hspace{3mm}   & \mathbf{E}\{(u-Lx)^* Q_{uu}(u-Lx)\}\\
\textup{subject to}\hspace{3mm} & y_i=C_ix+v_i\\
                   & u_i = \mu_i(y_i)\\
                   & \text{for all } i\in \mathbb{N}.
\end{aligned}
\end{equation}
is unique and linear in $y$.\\
\end{thm}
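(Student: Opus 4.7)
The plan is to realize the problem as a Hilbert--space projection and then collapse that projection onto the subspace of linear policies by exploiting the Gaussian structure of the data.

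First, I would equip the space of admissible square--integrable stochastic policies
$\mathcal{H}=\{u=(u_i)_{i\in\mathbb{N}}: u_i=\mu_i(y_i),\ \mathbf{E}\{u^*Q_{uu}u\}<\infty\}$
with the inner product $\langle u,w\rangle=\mathbf{E}\{u^*Q_{uu}w\}$. Because $Q_{uu}$ is a bounded, self--adjoint, positive definite operator, this inner product is equivalent to the ambient $L^2$ inner product, so $\mathcal{H}$ is a closed linear subspace of $L^2$. The boundedness of $L$ together with $\mathbf{E}\{x^*x\}<\infty$ gives $Lx\in L^2$. The cost rewrites as $\|u-Lx\|_{\mathcal{H}}^2$, so the Hilbert projection theorem yields a unique minimizer $\mu^\star\in\mathcal{H}$, handling both existence and uniqueness.

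Next, let $\mathcal{H}_\ell\subset\mathcal{H}$ be the (closed) subspace of policies with each $\mu_i$ linear, $\mu_i(y_i)=K_iy_i$, and let $\mu^\ell$ be the projection of $Lx$ onto $\mathcal{H}_\ell$. The goal is to prove $\mu^\ell=\mu^\star$. The first--order condition that defines $\mu^\star$ is that for every measurable variation $\delta u_i$ of $y_i$ of appropriate size,
\begin{equation}
\mathbf{E}\bigl\{[Q_{uu}(\mu^\star-Lx)]_i \,\bigm|\, y_i\bigr\}=0\quad \text{for every } i\in\mathbb{N},
\end{equation}
whereas the condition for $\mu^\ell$ only requires the weaker linear orthogonality
\begin{equation}
\mathbf{E}\bigl\{[Q_{uu}(\mu^\ell-Lx)]_i\, y_i^*\bigr\}=0\quad \text{for every } i\in\mathbb{N}.
\end{equation}

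The key step is then a Gaussian collapse. Since $\mu^\ell$ is a bounded linear function of $y$, the residual $z_i:=[Q_{uu}(\mu^\ell-Lx)]_i$ is a bounded linear functional of the jointly Gaussian pair $(x,v)$ and therefore jointly Gaussian with $y_i$. For jointly Gaussian vectors the conditional expectation is affine, so $\mathbf{E}\{z_i\mid y_i\}=A_iy_i$ for some matrix $A_i$. Combining with the weak orthogonality yields $A_i\mathbf{E}\{y_iy_i^*\}=0$, hence $A_iy_i=0$ in $L^2$ (we may take $A_i=0$ on the kernel of the covariance without loss). Thus $\mathbf{E}\{z_i\mid y_i\}=0$, which is precisely the full first--order condition satisfied only by $\mu^\star$. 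By uniqueness $\mu^\ell=\mu^\star$, proving optimality and linearity simultaneously.

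The step I expect to require the most care is the Gaussian collapse in infinite dimensions: one must check that $z_i$ is a well--defined $L^2$ random variable even though $Q_{uu}$ and $L$ mix infinitely many components, and that joint Gaussianity of $(z_i,y_i)$ survives the bounded linear operations. These follow from the boundedness assumptions on $Q_{uu}$, $L$, and $C_i$ together with $\mathbf{E}\{x^*x\},\mathbf{E}\{v^*v\}<\infty$, which ensure $L^2$--convergence of the relevant infinite sums and preservation of Gaussianity under $L^2$ limits. Closedness of $\mathcal{H}_\ell$ requires the same bounded--operator bookkeeping, but once these technicalities are dispatched the variational argument proceeds identically to the finite case of Proposition~\ref{radner1}.
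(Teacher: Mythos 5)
Your proposal is correct and follows essentially the same route as the paper: project $Lx$ onto the subspace of linear policies, and use the Gaussian structure to upgrade the resulting uncorrelatedness of the residual $[Q_{uu}(\mu^\ell-Lx)]_i$ with $y_i$ into orthogonality against every measurable function of $y_i$. The only cosmetic difference is that the paper concludes by a direct completion-of-squares showing the linear projection dominates every $\mu\in\mathcal{M}\cap\mathcal{C}$ (using uncorrelated $+$ Gaussian $\Rightarrow$ independent), whereas you conclude by matching first-order conditions and invoking uniqueness of the projection onto the larger space of measurable policies (using that Gaussian conditional expectations are linear); these are interchangeable.
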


\begin{proof}
Note first that $y_i$ is bounded since $C_i$, $x$, and $v_i$ are bounded.

Let $\mathcal{Z}$ be the linear space of functions such that $z
\in \mathcal{Z}$ if $z_i$ is a linear transformation of $y_i$,
that is $z_i=A_i y_i$ for some real matrix $A_i \in
\mathbb{R}^{m_i\times p_i}$.
Since $Q_{uu}$ is a bounded symmetric positive definite linear operator, $\mathcal{Z}$ is a linear
space under the inner product
$$\langle g,h \rangle =\mathbf{E}\{g^* Q_{uu}h\},$$
and norm
$$\|g\|^2=\mathbf{E}\{g^* Q_{uu}g\}.$$
The optimization problem in (\ref{opt1}) where we search for the \textit{linear}
optimal decision can be written as
\begin{equation}
\label{optinner}
\begin{aligned}
\min_{u\in \mathcal{Z}}\hspace{3mm}   & \|u-Lx\|^2
\end{aligned}
\end{equation}
Finding the best linear optimal decision $u_\star \in \mathcal{Z}$ to the above
problem is equivalent to finding the shortest distance from the
subspace $\mathcal{Z}$ to the element $Lx$ ($Lx$ is bounded since $L$ and $x$ are bounded), where the minimizing
$u_\star$ is the projection of $Lx$ on $\mathcal{Z}$, and hence
unique. Also, since $u_\star$ is the projection, we have
$$0=\langle u_\star-Lx, u \rangle=\mathbf{E}\{(u_\star-Lx)^* Q_{uu}u\},$$
for all $u\in \mathcal{Z}$. In particular, for
$f_i=(0,0,...,z_i,0, 0, ...)\in \mathcal{Z}$, we have
$$
\mathbf{E}\{(u_\star-Lx)^* Q_{uu}f_i\} =
\mathbf{E}\{[(u_\star-Lx)^* Q_{uu}]_i z_i\}=0.
$$
The Gaussian assumption implies that $$[(u_\star-Lx)^* Q_{uu}]_i$$
is independent of $z_i=A_iy_i$, for all linear transformations
$A_i$. This gives in turn that $[(u_\star-Lx)^* Q_{uu}]_i$ is
independent of $y_i$. Hence, for any decision $\mu\in
\mathcal{M}\cap \mathcal{C}$, \textit{linear or nonlinear}, we
have that
\begin{equation*}
\begin{aligned}
\mathbf{E}(u_\star-Lx)^* Q_{uu}\mu(y)&=
\sum_i \mathbf{E}\{[(u_\star-Lx)^* Q_{uu}]_i \mu_i(y_i)\}\\
		&=0,
\end{aligned}
\end{equation*}
and
\begin{equation*}
\begin{aligned}
&\mathbf{E}(\mu(y) -  Lx)^*  Q_{uu}(\mu(y)-Lx) \\ 
&= 	\mathbf{E}(u_\star-Lx+\mu(y)-u_\star)^* Q_{uu}(u_\star-Lx+\mu(y)-u_\star)\\
&=  	\mathbf{E}(u_\star-Lx)^* Q_{uu}(u_\star-Lx)\\
&+
 	\mathbf{E}(\mu(y)-u_\star)^*Q_{uu}(\mu(y)-u_\star) \\
&+	2\mathbf{E}(u_\star-Lx)^*Q_{uu}(\mu(y)-u_\star)\\
&=	\mathbf{E}(u_\star-Lx)^*Q_{uu}(u_\star-Lx)\\
&+
 	\mathbf{E}(\mu(y)-u_\star)^*Q_{uu}(\mu(y)-u_\star)\\
&\geq \mathbf{E}(u_\star-Lx)^*Q_{uu}(u_\star-Lx)
\end{aligned}
\end{equation*}
with equality if and only if $\mu(y)=u_\star$. This
concludes the proof.
\end{proof}

\section{Team Decision Problems with Power Constraints}
In practice, we often have power constraints on the control variables of the from
 $\gamma_i\geq \E |\mu_i(y_i)|^2$. The question is whether linear decisions are optimal
 and if there is a practical algorithm that can obtain optimal decisions. The introductory example 
 clearly showed that linear decisions are not optimal for the case of point-wise optimization with
 a power constraint. Thus, there is no reason to expect that linear decisions are optimal for the stochastic
 (average) case. This will be addressed in this section.

Consider the modified version of the optimization problem (\ref{static}):
\begin{equation}
\label{power}
\begin{aligned}
\min_{\mu}\hspace{2mm} & \mathbf{E}
\left[
\begin{matrix}
x\\
u
\end{matrix}
\right]^*
\left[
\begin{matrix}
Q & S\\
S^* & R
\end{matrix}
\right]
\left[
\begin{matrix}
x\\
u
\end{matrix}
\right]\\
\text{subject to } & y_i=C_ix\\
                   & u_i = \mu_i(y_i)\\
                   & \gamma_i\geq \E |\mu_i(y_i)|^2\\
                   & \text{for } i=1,..., N.
\end{aligned}
\end{equation}
The difference from Radner's original formulation is that we have added 
power constraints to the decision functions, $\gamma_i\geq \E |\mu_i(y_i)|^2$. Note that additional constraints 
in functional optimization could give rise to complex nonlinear optimal solution as was shown in Example \ref{counterex} in the introduction.


In the sequel, we will prove a more general theorem, where we consider power constraints 
on a set of quadratic forms in both the state $x$ and the decision function $\mu$.

\begin{thm}
\label{gattami}
Let $x$ be a Gaussian variable with zero mean and given covariance 
matrix $X$,
taking values in $\mathbb{R}^n$.
Also, let
$
\left[
\begin{matrix}
Q_0 & S_0\\
S_0^* & R_0
\end{matrix}
\right]\in \sym_{+}^{m+n}
$,   $R_0\in \sym_{++}^m$, 
$
\left[
\begin{matrix}
Q_j & S_j\\
S_j^* & R_j
\end{matrix}
\right]\in \sym^{m+n}
$,
and $R_j\in \sym_{+}^m$, 
for $j = 1, ..., M$.
Assume that the optimization problem
\begin{equation}
\label{main}
\begin{aligned}
\min_{\mu\in\mathcal{C}}\hspace{2mm} & \mathbf{E}\left[
\begin{matrix}
x\\
\mu(x)
\end{matrix}
\right]^*
\left[
\begin{matrix}
Q_0 & S_0\\
S_0^* & R_0
\end{matrix}
\right]
\left[
\begin{matrix}
x\\
\mu(x)
\end{matrix}
\right]\\
\textup{subject to } & 
\mathbf{E}\left[
\begin{matrix}
x\\
\mu(x)
\end{matrix}
\right]^*
\left[
\begin{matrix}
Q_j & S_j\\
S_j^* & R_j
\end{matrix}
\right]
\left[
\begin{matrix}
x\\
\mu(x)
\end{matrix}
\right] \leq \gamma_j\\ 
& j=1,..., M
\end{aligned}
\end{equation}
is feasible. Then, linear decisions $\mu$ given by $\mu(x) = K(X) x$, with $K(X)\in \mathbb{K}$, are optimal.\\
\end{thm}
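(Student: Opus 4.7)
The plan is to attack the problem by Lagrangian duality, reducing it to an unconstrained Radner-type problem. For nonnegative multipliers $\lambda = (\lambda_1,\ldots,\lambda_M)$, I form the Lagrangian
\[
L(\mu,\lambda) = \mathbf{E}\begin{bmatrix}x \\ \mu(x)\end{bmatrix}^{*}\begin{bmatrix}Q_\lambda & S_\lambda \\ S_\lambda^{*} & R_\lambda\end{bmatrix}\begin{bmatrix}x \\ \mu(x)\end{bmatrix} - \sum_{j=1}^{M} \lambda_j\gamma_j,
\]
where $Q_\lambda = Q_0+\sum_j\lambda_j Q_j$, $S_\lambda = S_0+\sum_j\lambda_j S_j$, and $R_\lambda = R_0+\sum_j\lambda_j R_j$. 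Because $R_0\succ 0$ and each $R_j\succeq 0$, we have $R_\lambda\succ 0$ for every $\lambda\geq 0$, so completing the square in $\mu(x)$ rewrites the $\mu$-dependent piece of $L(\mu,\lambda)$ as $\mathbf{E}\{(\mu(x)-L_\lambda x)^{*} R_\lambda (\mu(x)-L_\lambda x)\}$ with $L_\lambda := -R_\lambda^{-1} S_\lambda^{*}$. This is exactly the cost of Proposition~\ref{radner1} under the identifications $Q_{uu}=R_\lambda$, $L=L_\lambda$, $y_i = x_i$ (each $C_i$ a coordinate projection, $v_i\equiv 0$); that proposition then supplies a unique linear, block-diagonal minimizer $\mu_\lambda(x)=K(\lambda)\,x$ with $K(\lambda)\in\mathbb{K}$.

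The second step is to establish strong duality, so that a dual optimum $\lambda^{\star}\geq 0$ produces a linear inner minimizer $\mu_{\lambda^{\star}}$ that is automatically primal feasible (by complementary slackness) and hence primal optimal. The key reformulation is that both the objective and the constraints depend on $\mu$ only through the moment blocks $X=\mathbf{E}\{xx^{*}\}$, $Y(\mu)=\mathbf{E}\{x\mu(x)^{*}\}$, and $P(\mu)=\mathbf{E}\{\mu(x)\mu(x)^{*}\}$, with $X$ fixed. The problem is therefore a linear program over the moment set $\mathcal{S}=\{(Y(\mu),P(\mu)):\mu\in\mathcal{C}\}$, and $\mathcal{S}$ is convex by an independent-randomization argument (mixing two decisions through an auxiliary Bernoulli switch yields a measurable, block-diagonal decision whose moments are the convex combination). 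Combined with the assumed feasibility (providing a Slater-type point, possibly after a mild perturbation), this delivers zero duality gap, and the asserted form $\mu^{\star}(x)=K(X)\,x$ with $K(X)\in\mathbb{K}$ follows at once.

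I expect the main obstacle to be precisely this strong-duality step, because the primal is an infinite-dimensional functional optimization to which finite-dimensional convex-programming theorems do not apply off the shelf. I would either (i) work entirely in the moment reformulation, proving convexity and closedness of $\mathcal{S}$ and showing that the linear program over $\mathcal{S}$ attains its optimum on the sub-cone $\{(XK^{*},KXK^{*}):K\in\mathbb{K}\}$; or (ii) invoke Sion's minimax theorem for $L(\mu,\lambda)$, using convexity in $\mu$, affine (hence concave) dependence on $\lambda$, and compactness/convexity of a suitably truncated $\mathbb{K}$. Once the minimax equality is secured, the structural claim of the theorem drops out from Radner's proposition applied at $\lambda^{\star}$.
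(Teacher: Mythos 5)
Your skeleton---form the Lagrangian with multipliers $\lambda\geq 0$, note $R_\lambda=R_0+\sum_j\lambda_jR_j\succ 0$, complete the square, and invoke Proposition~\ref{radner1} to get a unique linear block-diagonal minimizer $K(\lambda)\in\mathbb{K}$ for each fixed $\lambda$---is exactly the paper's first move, and your route (ii) is essentially the paper's proof. What the paper does to close the gap, and what you leave as a to-do, is a sandwich rather than a complementary-slackness argument: starting from weak duality $p_\star=\min_{\mu\in\mathcal{C}}\max_{\lambda}(\cdot)\geq\max_{\lambda}\min_{\mu\in\mathcal{C}}(\cdot)$, it uses Radner to replace $\min_{\mu\in\mathcal{C}}$ by $\min_{K\in\mathbb{K}}$ inside the max--min, swaps $\max_\lambda$ and $\min_K$ for the resulting \emph{finite-dimensional} game (the paper homogenizes by introducing $\lambda_0$ and normalizing $\sum_{j=0}^M\lambda_j=1$ so the multiplier set is a compact simplex, and restricts $K$ to a compact set by coercivity, so Proposition~\ref{minmaxtheorem} applies), and then bounds $\min_{K}\max_{\lambda}(\cdot)\geq\min_{\mu\in\mathcal{C}}\max_{\lambda}(\cdot)=p_\star$ because linear decisions form a subset of $\mathcal{C}$. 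The chain $p_\star\geq\cdots\geq p_\star$ forces equality throughout, so the constrained optimum over $\mathcal{C}$ is attained by some $Kx$ with $K\in\mathbb{K}$. This avoids having to establish dual attainment of a $\lambda^\star$ and primal attainment of a $\mu^\star$, both of which your complementary-slackness finish would require; if you go that way, those two attainment claims need separate proofs.

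Route (i) as stated has a genuine gap. Mixing two decisions through an independent Bernoulli switch does convexify the moment set, but the mixture is a \emph{randomized} policy, not a deterministic measurable function of the $y_i$, hence not an element of $\mathcal{C}$; what you obtain is therefore a convex relaxation of $\mathcal{S}$. For a linear objective with linear constraints, the optimal value over the convex hull of a set can be strictly smaller than over the set itself, so convexity of the relaxed moment set does not by itself yield the theorem. The relaxation can be repaired---show that the relaxed problem's optimum is attained at a point of the sub-cone $\{(XK^*,KXK^*):K\in\mathbb{K}\}$, whose elements correspond to deterministic, feasible decisions, whence the relaxed and original values coincide---but that is precisely where the work lies, and carrying it out essentially reproduces the minimax argument of route (ii). Closedness of the moment set, which you would also need for attainment in the relaxed program, is likewise not automatic.
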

\begin{proof}
Consider the expression
\begin{equation*}
\begin{aligned}
\mathbf{E}&
\left[
\begin{matrix}
x\\
\mu(x)
\end{matrix}
\right]^*
\left[
\begin{matrix}
Q_0 & S_0\\
S_0^* & R_0
\end{matrix}
\right]
\left[
\begin{matrix}
x\\
\mu(x)
\end{matrix}
\right]+\\
&\sum_{j=1}^M 
\lambda_j
\left( \mathbf{E}
\left[
\begin{matrix}
x\\
\mu(x)
\end{matrix}
\right]
\left[
\begin{matrix}
Q_j & S_j\\
S_j^* & R_j
\end{matrix}
\right]
\left[
\begin{matrix}
x\\
\mu(x)
\end{matrix}
\right]
-\gamma_j
\right).
\end{aligned}
\end{equation*}
Take the expectation of a quadratic form with index $j$ to be larger than $\gamma_j$. Then, 
$\lambda_j \rightarrow \infty$ makes the value of the expression above infinite. On the other hand, if  the expectation of a quadratic form with index $j$ is smaller than $\gamma_j$,
then the maximizer $\lambda_j$ is optimal for  $\lambda_j = 0$.

Now let $p_\star$ be the optimal value of the optimization problem (\ref{main}), and consider the objective function
\begin{equation*}
\begin{aligned}
\left[
\begin{matrix}
x\\
u
\end{matrix}
\right]^*
\left[
\begin{matrix}
Q_0 & S_0\\
S_0^* & R_0
\end{matrix}
\right]
\left[
\begin{matrix}
x\\
u
\end{matrix}
\right] & = x^*(Q_0-S_0R_0^{-1}S_0^*)x\\
	&+(u+R_0^{-1}S_0^*x)^*R_0(u+R_0^{-1}S_0^*x).
\end{aligned}
\end{equation*}

We have that $Q_0-S_0R_0^{-1}S_0^*\succeq 0$, since it's the Schur complement of $R_0$ in the positive 
semi-definite matrix
$
\left[
\begin{matrix}
Q_0 & S_0\\
S_0^* & R_0
\end{matrix}
\right].
$
Since $R_0\succ 0$, a necessary condition for the objective function to be zero is that $u=-R_0^{-1}S_0^*x$, and so $u$ must be linear
(In order for $u$ to have the structure given by $\mathcal{C}$, 
$R_0^{-1}S_0^*$ must be in $\K$, to satisfy the information 
constraints).
 
Now assume that $p_\star>0$. We have
{\small
\begin{equation}
\label{pstar}
\begin{aligned}
p_\star &= \min_{\mu\in\mathcal{C}} \max_{\lambda_i\in \R_+}\hspace{1mm}
\mathbf{E}
\left[
\begin{matrix}
x\\
\mu(x)
\end{matrix}
\right]^*
\left[
\begin{matrix}
Q_0 & S_0\\
S_0^* & R_0
\end{matrix}
\right]
\left[
\begin{matrix}
x\\
\mu(x)
\end{matrix}
\right]\\
&+\sum_{j=1}^M 
\lambda_j
\left( \mathbf{E}
\left[
\begin{matrix}
x\\
\mu(x)
\end{matrix}
\right]
\left[
\begin{matrix}
Q_j & S_j\\
S_j^* & R_j
\end{matrix}
\right]
\left[
\begin{matrix}
x\\
\mu(x)
\end{matrix}
\right]
-\gamma_j
\right)\\
&= \min_{\mu\in\mathcal{C}} \max_{\lambda_i\in \R_+}\hspace{1mm} \\
& \mathbf{E}
\left[
\begin{matrix}
x\\
\mu(x)
\end{matrix}
\right]^*
\left(
\left[
\begin{matrix}
Q_0 & S_0\\
S_0^* & R_0
\end{matrix}
\right]
+
\sum_{j=1}^M 
\lambda_i
\left[
\begin{matrix}
Q_j & S_j\\
S_j^* & R_j
\end{matrix}
\right]
\right)
\left[
\begin{matrix}
x\\
\mu(x)
\end{matrix}
\right]\\
&-\sum_{j=1}^M \lambda_j\gamma_j.
\end{aligned}
\end{equation}
}
Now introduce $\lambda_0$ and the matrix
$$
\left[
\begin{matrix}
Q    	& S\\
S^* & R
\end{matrix}
\right]
=
\sum_{j=0}^M 
\lambda_j
\left[
\begin{matrix}
Q_j & S_j\\
S_j^* & R_j
\end{matrix}
\right],
$$
and consider the minimax problem
\begin{equation}
\label{p0}
\begin{aligned}
p_0 &= \min_{\mu\in\mathcal{C}} \hspace{1mm}\max_{\substack{\lambda_j\geq 0\\ \sum_{j=0}^M \lambda_j=1}} \hspace{1mm}  \mathbf{E}
\left[
\begin{matrix}
x\\
\mu(x)
\end{matrix}
\right]^*
\left[
\begin{matrix}
Q & S\\
S^* & R
\end{matrix}
\right]
\left[
\begin{matrix}
x\\
\mu(x)
\end{matrix}
\right]\\
&-\sum_{j=1}^M \lambda_j\gamma_j.
\end{aligned}
\end{equation}
Note that a maximizing $\lambda_0$ must be positive, since $\lambda_0=0$ implies that $p_0\leq 0$,
while $\lambda_0>0$ gives  $p_0> 0$. We can always recover the optimal solutions of (\ref{pstar})
from that of (\ref{p0}) by dividing all variables by $\lambda_0$, that is $p_{\star}=p_0/\lambda_0$,
$ \lambda_j \mapsto \lambda_j/\lambda_0$, and $\mu\mapsto \mu/\lambda_0$.
Now we have the obvious inequality ($\min \max \{\cdot\}\geq \max  \min \{\cdot\}$)

\begin{equation*}
\begin{aligned}
p_0 &\geq 
\max_{\substack{\lambda_j\geq 0\\ \sum_{j=0}^M \lambda_j=1}}\min_{\mu\in\mathcal{C}} \hspace{1mm}  \mathbf{E}
\left[
\begin{matrix}
x\\
\mu(x)
\end{matrix}
\right]^*
\left(
\sum_{j=0}^M 
\lambda_j
\left[
\begin{matrix}
Q_j & S_j\\
S_j^* & R_j
\end{matrix}
\right]
\right)
\left[
\begin{matrix}
x\\
\mu(x)
\end{matrix}
\right]\\
&-\sum_{j=1}^M \lambda_j\gamma_j.
\end{aligned}
\end{equation*}

For any fixed values of $\lambda_j$, we have $R\succ 0$, so Theorem \ref{radner1} gives the
equality
\begin{equation*}
\begin{aligned}
&\min_{\mu\in\mathcal{C}}
\mathbf{E} \left[
\begin{matrix}
x\\
\mu(x)
\end{matrix}
\right]
\left[
\begin{matrix}
Q    	& S\\
S^* & R
\end{matrix}
\right]
\left[
\begin{matrix}
x\\
\mu(x)
\end{matrix}
\right]=\\
&
\min_{K\in\mathbb{K}}
\mathbf{E} \left[
\begin{matrix}
x\\
Kx
\end{matrix}
\right]
\left[
\begin{matrix}
Q    	& S\\
S^* & R
\end{matrix}
\right]
\left[
\begin{matrix}
x\\
Kx
\end{matrix}
\right], 
\end{aligned}
\end{equation*}
where the minimizing $K$ is unique. Thus,

{\small
\begin{equation*}
\begin{aligned}
p_0 & \geq 
\max_{\substack{\lambda_j\geq 0\\ \sum_{j=0}^M \lambda_j=1}}\min_{\mu\in\mathcal{C}} \hspace{1mm}  \mathbf{E}
\left[
\begin{matrix}
x\\
\mu(x)
\end{matrix}
\right]^*
\left(
\sum_{j=0}^M 
\lambda_j
\left[
\begin{matrix}
Q_j & S_j\\
S_j^* & R_j
\end{matrix}
\right]
\right)
\left[
\begin{matrix}
x\\
\mu(x)
\end{matrix}
\right]\\
&-\sum_{j=1}^M \lambda_j\gamma_j\\
&=
\max_{\substack{\lambda_j\geq 0\\ \sum_{j=0}^M \lambda_j=1}}\min_{K\in \K} \hspace{1mm}  \mathbf{E}
\left[
\begin{matrix}
x\\
Kx
\end{matrix}
\right]^*
\left(
\sum_{j=0}^M 
\lambda_j
\left[
\begin{matrix}
Q_j & S_j\\
S_j^* & R_j
\end{matrix}
\right]
\right)
\left[
\begin{matrix}
x\\
Kx
\end{matrix}
\right]\\
&-\sum_{j=1}^M \lambda_j\gamma_j.
\end{aligned}
\end{equation*}
}
The objective function is radially unbounded in $K$ since $R \succ 0$. Hence, it can be restricted to
a compact subset of $\K$. Thus, 

\begin{equation*}
\begin{aligned}
p_0 & \geq 
\max_{\substack{\lambda_j\geq 0\\ \sum_{j=0}^M \lambda_j=1}}\hspace{1mm} \min_{K\in \K}  \hspace{1mm}  \mathbf{E}
\left[
\begin{matrix}
x\\
Kx
\end{matrix}
\right]^*
\left(
\sum_{j=0}^M 
\lambda_j
\left[
\begin{matrix}
Q_j & S_j\\
S_j^* & R_j
\end{matrix}
\right]
\right)
\left[
\begin{matrix}
x\\
Kx
\end{matrix}
\right]\\
&-\sum_{j=1}^M \lambda_j\gamma_j\\
&=
\min_{K\in \K} \max_{\substack{\lambda_j\geq 0\\ \sum_{j=0}^M \lambda_j=1}} \hspace{1mm}  \mathbf{E}
\left[
\begin{matrix}
x\\
Kx
\end{matrix}
\right]^*
\left(
\sum_{j=0}^M 
\lambda_j
\left[
\begin{matrix}
Q_j & S_j\\
S_j^* & R_j
\end{matrix}
\right]
\right)
\left[
\begin{matrix}
x\\
Kx
\end{matrix}
\right]\\
&-\sum_{j=1}^M \lambda_j\gamma_j\\
&\geq \min_{\mu\in\mathcal{C}} \max_{\substack{\lambda_j\geq 0\\ \sum_{j=0}^M \lambda_j=1}}\hspace{1mm}  \mathbf{E}
\left[
\begin{matrix}
x\\
\mu(x)
\end{matrix}
\right]^*
\left(
\sum_{j=0}^M 
\lambda_i
\left[
\begin{matrix}
Q_j & S_j\\
S_j^* & R_j
\end{matrix}
\right]
\right)
\left[
\begin{matrix}
x\\
\mu(x)
\end{matrix}
\right]\\
&-\sum_{j=1}^M \lambda_j\gamma_j\\
&= p_0,
\end{aligned}
\end{equation*}
where the equality is obtained by applying Proposition \ref{minmaxtheorem} in the Appendix, 
the second inequality follows from the fact that the set of linear decisions $Kx$, $K\in \K$, is a subset of $\mathcal{C}$, and the second equality follows from the definition of $p_0$. Hence, linear decisions are optimal, and the proof is complete.
\end{proof}

{\bf Remark:} Although Theorem \ref{infradner} is stated and proved for $y=x$ and
$u=\mu(y)=\mu(x)$, it extends easily to the case $y=Cx$ for any matrix $C$, which is often the case in applications. 
Note also that we may set $N = \infty$ and the result would still hold by using Theorem \ref{gattami} in the proof.

\section{Computation of The Optimal Team Decisions}
The optimization problem that we would like to solve when assuming linear decisions is

\begin{equation}
\label{linopt}
	\begin{aligned}
		\min_{\gamma_0, K \in \K}    &\hspace{2mm} \gamma_0\\
		\text{subject to } &\hspace{2mm}
		\mathbf{E}\left[
		\begin{matrix}
		x\\
		KCx
		\end{matrix}
		\right]^*
		\left[
		\begin{matrix}
		Q_j & S_j\\
		S_j^* & R_j
		\end{matrix}
		\right]
		\left[
		\begin{matrix}
		x\\
		KCx
		\end{matrix}
		\right] \leq \gamma_j, \hspace{2mm}\\&  \hspace{2mm} j=0, ..., M,\\
		&\hspace{2mm} x\sim \mathcal{N}(0,H^2).
	\end{aligned}
\end{equation}
 with $H\succeq 0$.
Note that we can write the constraints as
\begin{equation}
\begin{aligned}
\mathbf{E}\left[
		\begin{matrix}
		x\\
		KCx
		\end{matrix}
		\right]^* &
		\left[
		\begin{matrix}
		Q_j & S_j\\
		S_j^* & R_j
		\end{matrix}
		\right]
		\left[
		\begin{matrix}
		x\\
		KCx
		\end{matrix}
		\right] \\ &=
		\mathbf{E} \left\{\mathbf{Tr} \left[
		\begin{matrix}
		I\\
		KC
		\end{matrix}
		\right]^*
		\left[
		\begin{matrix}
		Q_j & S_j\\
		S_j^* & R_j
		\end{matrix}
		\right]
		\left[
		\begin{matrix}
		I\\
		KC
		\end{matrix}
		\right]		
		xx^*\right\}\\
		&=\mathbf{Tr} H
		\left[
		\begin{matrix}
		I\\
		KC
		\end{matrix}
		\right]^*
		\left[
		\begin{matrix}
		Q_j & S_j\\
		S_j^* & R_j
		\end{matrix}
		\right]
		\left[
		\begin{matrix}
		I\\
		KC
		\end{matrix}
		\right] H,
\end{aligned}
\end{equation}
where we used that $\mathbf{E} xx^* = X = H^2$.
Hence, we obtain a set of convex quadratic inequalities (convex since $R_j\succeq 0$ for all $j$)
$$
\mathbf{Tr} H
		\left[
		\begin{matrix}
		I\\
		KC
		\end{matrix}
		\right]^*
		\left[
		\begin{matrix}
		Q_j & S_j\\
		S_j^* & R_j
		\end{matrix}
		\right]
		\left[
		\begin{matrix}
		I\\
		KC
		\end{matrix}
		\right] H\leq \gamma_j.
$$
There are many existing computational methods to solve convex quadratic optimization problems (see \cite{boyd:vandenberghe:2004}).

Alternatively, we can formulate the optimization problem as a set of linear matrix inequalities as follows. For simplicity, we will assume that $R_j\succ 0$ for all $j$ (The case $R_j\succeq 0$ is analogue with some technical conditions). 

\begin{thm}
The team optimization problem (\ref{linopt}) is equivalent to 
the semi-definite program
\begin{equation}
\label{sd}
	\begin{aligned}
		\min_{\gamma_0, K \in \K}    	&\hspace{2mm} \gamma_0 \\
		\textup{subject to } 	&\hspace{2mm} \mathbf{Tr} P_j \leq \gamma_j 
		\end{aligned}
	\end{equation}
	{\small
\begin{equation*}
	\begin{aligned}
	 0 &\preceq 
 	\left[
	\begin{matrix}
		P_j -HQ_jH - HS_jKCH - HC^*K^*S_j^*H & HC^*K^* R_j\\
		R_jKCH							& R_j	
	\end{matrix}
	\right]\\ 
	& \hspace{3cm} j=0, ..., M.
	\end{aligned}
\end{equation*}}
\end{thm}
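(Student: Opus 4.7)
The plan is to show the two optimization problems share the same feasible region (in the $K$-variable), which immediately gives equality of optimal values of $\gamma_0$. The key tool is the Schur complement, which will allow me to rewrite each quadratic-in-$K$ inequality as a linear matrix inequality after introducing an auxiliary symmetric slack matrix $P_j$.

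First I would expand the quadratic form inside the trace. A direct multiplication gives
\begin{equation*}
\begin{bmatrix} I \\ KC \end{bmatrix}^{*}\begin{bmatrix} Q_j & S_j \\ S_j^{*} & R_j \end{bmatrix}\begin{bmatrix} I \\ KC \end{bmatrix} = Q_j + S_j KC + C^{*}K^{*}S_j^{*} + C^{*}K^{*}R_j KC,
\end{equation*}
so the constraint $\mathbf{Tr}\,H(\cdot)H\le \gamma_j$ derived just above the theorem is
\begin{equation*}
\mathbf{Tr}\,H\bigl(Q_j + S_j KC + C^{*}K^{*}S_j^{*} + C^{*}K^{*}R_j KC\bigr)H \leq \gamma_j.
\end{equation*}

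Next I would apply the Schur complement to the $2\times 2$ block matrix appearing in the LMI of the theorem. Since $R_j \succ 0$ by the standing assumption, that block matrix is positive semidefinite if and only if
\begin{equation*}
P_j - HQ_jH - HS_jKCH - HC^{*}K^{*}S_j^{*}H \succeq HC^{*}K^{*}R_j\,R_j^{-1} R_j KCH = HC^{*}K^{*}R_j KCH,
\end{equation*}
i.e.\ if and only if $P_j \succeq H\bigl(Q_j + S_j KC + C^{*}K^{*}S_j^{*} + C^{*}K^{*}R_j KC\bigr)H$. Taking the trace of both sides (trace is monotone on the PSD cone) shows that any $(K,P_j)$ feasible for the SDP satisfies the quadratic inequality in (\ref{linopt}), so the SDP feasible set projects into the quadratic-constraint feasible set.

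Conversely, given any $K$ feasible for (\ref{linopt}), I would simply define $P_j$ to equal the right-hand side $H(Q_j + S_j KC + C^{*}K^{*}S_j^{*} + C^{*}K^{*}R_j KC)H$ for each $j$. With this choice the Schur-complement inequality holds with equality in its top-left block, so the LMI holds, and $\mathbf{Tr}\,P_j$ equals the left-hand side of the original constraint, which is bounded by $\gamma_j$. Thus every feasible $K$ for (\ref{linopt}) extends to a feasible $(K,P_0,\ldots,P_M)$ for (\ref{sd}) with the same value of $\gamma_0$, establishing equivalence. No step is truly the main obstacle; the only point requiring care is the sign convention on $H$ (using $H\succeq 0$ so that $H=H^{*}$ and the trace manipulation $\E xx^{*}=H^{2}$ go through), and the standing assumption $R_j\succ 0$, which is explicitly invoked so that the Schur complement equivalence holds in both directions.
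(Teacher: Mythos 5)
Your proof is correct and follows essentially the same route as the paper: introduce the slack matrices $P_j$, expand the quadratic form, and convert the resulting quadratic matrix inequality to an LMI via the Schur complement of $R_j\succ 0$. You are in fact slightly more careful than the paper in verifying both directions of the slack-variable reformulation (trace monotonicity one way, the explicit choice $P_j = H(Q_j + S_jKC + C^*K^*S_j^* + C^*K^*R_jKC)H$ the other), which the paper leaves implicit.
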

\begin{proof}
Introduce the matrices $P_j\in \sym^n$, and write the given constraints as
$$
\gamma_j \geq \mathbf{Tr} P_j
$$
\begin{equation}
	\begin{aligned} 
		P_j-
		H
		\left[
		\begin{matrix}
		I\\
		KC
		\end{matrix}
		\right]^*
		\left[
		\begin{matrix}
		Q_j & S_j\\
		S_j^* & R_j
		\end{matrix}
		\right]
		\left[
		\begin{matrix}
		I\\
		KC
		\end{matrix}
		\right] H \succeq 0.		
	\end{aligned}
\end{equation}
Now we have that
\begin{equation}
	\begin{aligned} 
		0 &\preceq  P_j - 
		H
		\left[
		\begin{matrix}
		I\\
		KC
		\end{matrix}
		\right]^*
		\left[
		\begin{matrix}
		Q_j & S_j\\
		S_j^* & R_j
		\end{matrix}
		\right]
		\left[
		\begin{matrix}
		I\\
		KC
		\end{matrix}
		\right] H\\ 
		&= P_j -HQ_jH - HS_jKCH - HC^*K^*S_j^* H \\
		&- HC^*K^* R_j KCH.		
	\end{aligned}
\end{equation}
Since $R_j\succ 0$, the quadratic inequality above can be transformed to a linear matrix inequality using the Schur complement (\cite{boyd:vandenberghe:2004}), which is given by
{\small
$$
0 \preceq
\left[
	\begin{matrix}
		P_j -HQ_jH - HS_jKCH - HC^*K^*S_j^* H & HC^*K^* R_j\\
		R_jKCH							& R_j	
	\end{matrix}
\right]
$$}
Hence, our optimization problem to be solved is given by
\begin{equation*}
	\begin{aligned}
		\min_{\gamma_0, K \in \K}    	&\hspace{2mm} \gamma_0 \\
		\textup{subject to } 	&\hspace{2mm} \mathbf{Tr} P_j \leq \gamma_j 
		\end{aligned}
	\end{equation*}
	{\small
\begin{equation*}
	\begin{aligned}
	 0 &\preceq 
 	\left[
	\begin{matrix}
		P_j -HQ_j H - HS_jKCH - HC^*K^*S_j^* H & HC^*K^* R_j\\
		R_jKCH							& R_j	
	\end{matrix}
	\right]\\ 
	& \hspace{3cm} j=0, ..., M.
	\end{aligned}
\end{equation*}}
which proves our theorem.
\end{proof}


\section{Deterministic Team Problems with Quadratic Constraints}

\label{minimaxteam}

We considered the problem of static stochastic
team decision in the previous sections. This section
treats an analogous version for the deterministic (or worst case)
problem. For the dynamic setting with partially nested information(which will be discussed in the next section), 
this corresponds to the $\mathcal{H}_{\infty}$ control problem. 
Although the problem formulation is very similar, the ideas of the solution are
considerably different, and in a sense more difficult.

The deterministic problem considered is a quadratic game
between a team of players and nature. Each player has limited
information
that could be different from the other players in the team.
This game is formulated as a minimax problem, where the team is
the minimizer and nature is the maximizer.

Consider the following team decision problem
\begin{equation}
\label{minimax}
\begin{aligned}
\inf_{\mu} \hspace{1mm} & J(x,u)\\
\text{subject to }\hspace{1mm} & y_i = C_ix\\
                   & u_i = \mu_i(y_i)\\
                   & \text{for } i=1,..., N
\end{aligned}
\end{equation}
where $u_i\in \mathbb{R}^{m_i}$, $m=m_1+\cdots + m_N$,
$C_i\in \mathbb{R}^{p_i\times n}$.\\
 $J(x,u)$ is a quadratic cost given by
$$J(x,u)= \sup_{x\leq 1}
 \left[
\begin{matrix}
x\\
u
\end{matrix}
\right] ^*
\left[
\begin{matrix}
Q_{xx} & Q_{xu}\\
Q_{ux} & Q_{uu}
\end{matrix}
\right]
\left[
\begin{matrix}
x\\
u
\end{matrix}
\right] ,
$$
where
$$
\left[
\begin{matrix}
Q_{xx} & Q_{xu}\\
Q_{ux} & Q_{uu}
\end{matrix}
\right] \in \mathbb{S}^{m+n}.
$$
We will be interested in the case $Q_{uu}\succ 0$.
The players $u_1$,..., $u_N$ make up a \textit{team}, which plays against
\textit{nature} represented by the vector $x$, using
$\mu\in \mathcal{S}$, that is
$$
\mu(Cx)=\left[
  \begin{matrix}
    \mu_1(C_1x)\\
    \vdots\\
    \mu_N(C_Nx)
  \end{matrix}
\right] .
$$



Now consider the team problem (\ref{minimax}) and  note that an equivalent condition for
the existence of a decision function $\mu_\star\in \mathcal{C}$ that achieves the value of the game $\gamma_\star$ is that 
$$
\left[
\begin{matrix}
x\\
\mu_\star(Cx)
\end{matrix}
\right] ^* 
\left[
\begin{matrix}
Q & S\\
S^* & R
\end{matrix}
\right]
 \left[
\begin{matrix}
x\\
\mu_\star(Cx)
\end{matrix}
\right] \leq \gamma_\star |x|^2$$
for all $x$. This is equivalent to
$$
\left[
\begin{matrix}
x\\
\mu_\star(Cx)
\end{matrix}
\right] ^*  
\left[
\begin{matrix}
Q-\gamma_\star I & S\\
S^* & R
\end{matrix}
\right]
\left[
\begin{matrix}
x\\
\mu_\star(Cx)
\end{matrix}
\right] \leq 0$$
for all $x$. This is an example of a \textit{quadratic constraint}. We could also have a set of quadratic
constraints that have to be mutually satisfied. For instance, in addition to the minimization
of the worst case quadratic cost, we could have constraints on the induced norms of the
decision functions

$$\frac{|\mu_i(C_ix)|^2}{|x|^2}\leq \gamma_i \hspace{4mm} \text{for all } x\neq 0,  \hspace{4mm} 
i=1, ..., M,$$ 
or equivalently given by the quadratic inequalities 

$$|\mu_i(C_ix)|^2 - \gamma_i |x|^2 \leq 0 \hspace{4mm} \text{for all } x,  \hspace{4mm} 
i=1, ..., M.$$ 
Also, the team members could share a common power source, and the power is proportional to the squared norm of the decisions $\mu_i$:
$$\sum _{i=1}^M |\mu_i(C_ix)|^2 - c |x|^2 \leq 0 \hspace{4mm} \text{for all } x,  $$ 
for some positive real number $c$.

It's not clear whether linear decisions are optimal, since the example given at the introduction indicates
that, in deterministic settings, nonlinear decisions are optimal \textit{pointwise}.

\begin{thm}
Let 
$$\left[
\begin{matrix}
Q_j & S_j\\
S_j^* & R_j
\end{matrix}
\right]\in \sym^{m+n}$$ 
for $j = 0, 1, ..., M$, $R_ 0\in \sym_{++}^m$, 
$R_j\in \sym_{+}^m$ 
for $j = 1, ..., M$.
Suppose that there exists a decision function $\mu\in \mathcal{C}$ such that
\begin{equation}
\label{maindet}
\begin{aligned}
\sup_{x\in \mathbb{R}^n}\hspace{2mm} 
\left[
\begin{matrix}
x\\
\mu(x)
\end{matrix}
\right]^*
\left[
\begin{matrix}
Q_j & S_j\\
S_j^* & R_j
\end{matrix}
\right]
\left[
\begin{matrix}
x\\
\mu(x)
\end{matrix}
\right] \leq 0, \hspace{3mm} j=0,..., M.
\end{aligned}
\end{equation}
Then, there exists a linear decision $\mu(x)= Kx$, $K\in \mathbb{K}$,  
such that (\ref{maindet}) is satisfied.\\
\end{thm}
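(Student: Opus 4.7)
The plan is to mirror the Lagrangian/minimax strategy used in the proof of Theorem \ref{gattami}, replacing the pointwise-in-$x$ deterministic inequality by expectation over a Gaussian ``test'' covariance $X$, and replacing the maximum over $j$ by a convex combination with weights $\lambda$ in the simplex $\Delta = \{\lambda\in \R_+^{M+1} : \sum_j \lambda_j = 1\}$. Proposition \ref{radner1} will then collapse the inner minimization over $\mu\in \mathcal{C}$ onto linear decisions in $\mathbb{K}$, and a minimax theorem closes the loop.

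First I would recast linear feasibility as a matrix inequality: for $\mu(x)=Kx$ with $K\in \mathbb{K}$, condition (\ref{maindet}) is equivalent to
$$N_j(K) := \begin{bmatrix} I \\ K \end{bmatrix}^* \begin{bmatrix} Q_j & S_j \\ S_j^* & R_j \end{bmatrix} \begin{bmatrix} I \\ K \end{bmatrix} \preceq 0, \quad j=0,\ldots,M.$$
Using the identities $\lambda_{\max}(A)=\sup_{X\in \Xi}\mathbf{Tr}(XA) = \sup_{X\in \Xi}\mathbf{E}_{x\sim\mathcal{N}(0,X)} x^*Ax$ with $\Xi=\{X\succeq 0 : \mathbf{Tr}(X)=1\}$, and $\max_j a_j=\sup_{\lambda\in \Delta}\sum_j \lambda_j a_j$, the conjunction ``$N_j(K)\preceq 0$ for all $j$'' collapses into the single scalar inequality
$$\sup_{(\lambda,X)\in \Delta\times \Xi}\ \mathbf{E}_X [x;Kx]^*\Bigl(\sum_j \lambda_j M_j\Bigr)[x;Kx]\ \leq\ 0.$$

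Next I would introduce
$$V\ =\ \inf_{\mu\in\mathcal{C}}\ \sup_{(\lambda,X)\in \Delta\times \Xi}\ \mathbf{E}_X [x;\mu(x)]^*\Bigl(\sum_j \lambda_j M_j\Bigr)[x;\mu(x)].$$
The hypothesised nonlinear $\mu$ satisfies $[x;\mu(x)]^* M_j[x;\mu(x)]\leq 0$ pointwise, so the expectation is $\leq 0$ for every $(\lambda,X)$, giving $V\leq 0$. Weak duality gives $V \geq \sup_{\lambda,X}\inf_{\mu\in\mathcal{C}}(\cdots)$; for any $\lambda$ with $\lambda_0>0$ one has $\sum_j\lambda_jR_j\succeq \lambda_0R_0\succ 0$, so the inner Gaussian team problem has a unique linear optimizer $K\in \mathbb{K}$ by Proposition \ref{radner1}. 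Applying Proposition \ref{minmaxtheorem} (the objective is affine in the compact convex variables $(\lambda,X)\in \Delta\times \Xi$ and convex, coercive in $K$) to swap $\inf_K$ with $\sup_{\lambda,X}$ gives
$$0\ \geq\ V\ \geq\ \inf_{K\in \mathbb{K}}\sup_{(\lambda,X)\in\Delta\times\Xi}\mathbf{Tr}\Bigl(X\sum_j \lambda_j N_j(K)\Bigr)\ =\ \inf_{K\in \mathbb{K}}\max_j \lambda_{\max}(N_j(K)).$$
Hence some $K^\star\in \mathbb{K}$ satisfies $N_j(K^\star)\preceq 0$ for all $j$, which is exactly (\ref{maindet}) with $\mu(x)=K^\star x$.

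The main obstacle is the boundary case $\lambda_0=0$: there $\sum_j\lambda_jR_j$ may drop to merely positive semidefinite and the inner problem in $K$ loses strict convexity and coercivity, jeopardising both the use of Proposition \ref{radner1} and the minimax swap. Following the $\lambda_0$-normalization used in the proof of Theorem \ref{gattami}, I would restrict attention to $\lambda_0\geq \varepsilon$ on a compact slice of $\Delta$, argue that maximizing $\lambda$'s with $\lambda_0=0$ can only make the dual bound looser (so the restriction is harmless), execute the swap on the resulting compact set, and pass to the limit $\varepsilon\downarrow 0$ by a standard continuity argument.
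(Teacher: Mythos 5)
Your overall route is the same as the paper's: relax the pointwise constraints to expectations over all Gaussian test covariances, use the stochastic results (Proposition \ref{radner1} / Theorem \ref{gattami}) to force the inner optimizer to be linear, and close with a minimax swap to extract a single $K$ valid simultaneously for all covariances and all multipliers. The one step that does not survive scrutiny as written is the swap of $\inf_K$ with $\sup_{(\lambda,X)}$: you justify it by saying the kernel is ``affine in the compact convex variables $(\lambda,X)\in\Delta\times\Xi$,'' but $\sum_j\lambda_j\mathbf{Tr}\bigl(XN_j(K)\bigr)$ is \emph{bilinear} in the pair $(\lambda,X)$, hence neither concave nor even quasi-concave on the product set (with two constraints one can arrange the value $1$ at two points of $\Delta\times\Xi$ and the value $0$ at their midpoint, so the superlevel sets are not convex). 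Consequently Proposition \ref{minmaxtheorem} --- which in addition demands strict concavity in the maximizing variable --- does not apply to the joint maximization, and neither does a Sion-type theorem. The gap is repairable: substitute $Y_j=\lambda_jX$ and enlarge the feasible set to $\mathcal{Y}=\{(Y_0,\dots,Y_M): Y_j\succeq 0,\ \sum_j\mathbf{Tr}\,Y_j=1\}$, which is convex and compact; the kernel $\sum_j\mathbf{Tr}\bigl(Y_jN_j(K)\bigr)$ is linear in $Y$ and convex and coercive in $K$, and its supremum over $\mathcal{Y}$ still equals $\max_j\lambda_{\max}\bigl(N_j(K)\bigr)$, so the swap goes through on the enlarged set. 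Alternatively one can stage the swap as the paper does, exchanging $K$ with $\lambda$ for each fixed $X$ first.

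The remaining ingredients of your argument are sound and in places more careful than the paper's own proof: the reduction of linear feasibility to $N_j(K)\preceq 0$, the bound $V\leq 0$ obtained by testing the hypothesised nonlinear $\mu$ against every $(\lambda,X)$, the coercivity of $K\mapsto\lambda_{\max}\bigl(N_0(K)\bigr)$ coming from $R_0\succ 0$ (needed so that the infimum over $K$ is attained and a feasible $K^\star$ actually exists), and the $\lambda_0\geq\varepsilon$ restriction followed by a compactness and limiting argument to handle the boundary where $\sum_j\lambda_jR_j$ may degenerate. The paper instead invokes Theorem \ref{gattami} as a black box for each covariance $X$, obtaining a covariance-dependent $K(X)$, and is itself terse about how the final $K$ is made independent of $X$ and about attainment; so once the joint-concavity point above is fixed, your inlined Lagrangian version is at least as complete as the published one.
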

\begin{proof}
Suppose there exists a decision function $\mu\in \mathcal{C}$ such that
(\ref{maindet}) is satisfied. Then, for any Gaussian variable $x\sim \mathcal{N}(0,X)$, we have that
$$
\E \left[
\begin{matrix}
x\\
\mu(x)
\end{matrix}
\right]^*
\left[
\begin{matrix}
Q_j & S_j\\
S_j^* & R_j
\end{matrix}
\right]
\left[
\begin{matrix}
x\\
\mu(x)
\end{matrix}
\right] \leq 0.
$$
Equivalently, for a given  $x\sim \mathcal{N}(0,X)$, the optimal value $s$ of the optimization problem
\begin{equation}
\label{det}
\begin{aligned}
\min_{\mu\in\mathcal{C}}\hspace{2mm} & \mathbf{E}\left[
\begin{matrix}
x\\
\mu(x)
\end{matrix}
\right]^*
\left[
\begin{matrix}
Q_0 & S_0\\
S_0^* & R_0
\end{matrix}
\right]
\left[
\begin{matrix}
x\\
\mu(x)
\end{matrix}
\right]\\
\textup{subject to } & 
\mathbf{E}\left[
\begin{matrix}
x\\
\mu(x)
\end{matrix}
\right]^*
\left[
\begin{matrix}
Q_j & S_j\\
S_j^* & R_j
\end{matrix}
\right]
\left[
\begin{matrix}
x\\
\mu(x)
\end{matrix}
\right] \leq 0\\ 
& j=1,..., M
\end{aligned}
\end{equation}
must be nonpositive, $s\leq 0$.
But Theorem \ref{gattami} gives that the decision function $\mu(x) = K(X)x$ is optimal, with  $K(X)\in \mathbb{K}$.
Since $\E xx^* = X$, we get the inequalities
\begin{equation}
\label{det2}
\begin{aligned}
0 &\geq \E\left[
\begin{matrix}
x\\
K(X)x
\end{matrix}
\right]^*
\left[
\begin{matrix}
Q_j & S_j\\
S_j^* & R_j
\end{matrix}
\right]
\left[
\begin{matrix}
x\\
K(X)x
\end{matrix}
\right] \\
&= \E \left\{ x^* \left[
\begin{matrix}
I\\
K(X)
\end{matrix}
\right]^*
\left[
\begin{matrix}
Q_j & S_j\\
S_j^* & R_j
\end{matrix}
\right]
\left[
\begin{matrix}
I\\
K(X)
\end{matrix}
\right]x\right\}\\
&= \mathbf{Tr} 
\left[
\begin{matrix}
I\\
K(X)
\end{matrix}
\right]^*
\left[
\begin{matrix}
Q_j & S_j\\
S_j^* & R_j
\end{matrix}
\right]
\left[
\begin{matrix}
I\\
K(X)
\end{matrix}
\right]X   
\end{aligned}
\end{equation}
for all $j$. Now let $\lambda_i\geq0$, $i=0, ..., M$, and
$$
\left[
\begin{matrix}
Q    	& S\\
S^* & R
\end{matrix}
\right]
=
\sum_{j=0}^M 
\lambda_j
\left[
\begin{matrix}
Q_j & S_j\\
S_j^* & R_j
\end{matrix}
\right].
$$
Introduce the set
$$\mathbb{X}=\{X: X\succeq 0, \mathbf{Tr} X=1\}.$$
The fact that for every covariance matrix $X$ there is
a matrix $K(X)$ such that (\ref{det2}) holds implies
$$
 \max_{\lambda_i \geq 0,  X\in \mathbb{X}} ~ \min_{K\in \mathbb{K}}  \mathbf{Tr} 
\left[
\begin{matrix}
I\\
K
\end{matrix}
\right]^*
\left[
\begin{matrix}
Q    	& S\\
S^* & R
\end{matrix}
\right]
\left[
\begin{matrix}
I\\
K
\end{matrix}
\right]X
 \leq 0.
$$
For every fixed $X$, we have a max-min problem which is  convex in $K$ and linear in 
$\lambda_i$, so we can switch the order of the minimization and maximization to get the max-min-max inequality
$$
 \max_{X\in \mathbb{X}} \min_{K\in \mathbb{K}}  \max_{\lambda_i \geq 0} 
 \mathbf{Tr} 
\left[
\begin{matrix}
I\\
K
\end{matrix}
\right]^*
\left[
\begin{matrix}
Q    	& S\\
S^* & R
\end{matrix}
\right]
\left[
\begin{matrix}
I\\
K
\end{matrix}
\right]X
 \leq 0.
$$
 which is equivalent to the existence of a matrix $K\in \mathbb{K}$ such that
$$ 
\left[
\begin{matrix}
I\\
K
\end{matrix}
\right]^*
\left[
\begin{matrix}
Q & S\\
S^* & R
\end{matrix}
\right]
\left[
\begin{matrix}
I\\
K
\end{matrix}
\right]  \preceq 0
$$
for every set of $\lambda_i\geq 0$, $i=0, ..., M$. 
This implies that there must exist a matrix $K\in \mathbb{K}$ such that
\begin{equation}
\label{alphaeq2}
\left[
\begin{matrix}
I\\
K
\end{matrix}
\right]^*
\left[
\begin{matrix}
Q_j & S_j\\
S_j^* & R_j
\end{matrix}
\right]
\left[
\begin{matrix}
I\\
K
\end{matrix}
\right]  \preceq 0,
\end{equation}
for all $j$. Finally, (\ref{alphaeq2}) implies that 
\begin{equation}
\label{alphaeq}
x^*\left[
\begin{matrix}
I\\
K
\end{matrix}
\right]^*
\left[
\begin{matrix}
Q_j & S_j\\
S_j^* & R_j
\end{matrix}
\right]
\left[
\begin{matrix}
I\\
K
\end{matrix}
\right]x  \leq 0 ~~~\textup{for all } ~ x, ~~~ j= 0, ..., M,
\end{equation}
and the proof is complete.
\end{proof}

%
%

\begin{thm}
\label{gattami2}
Let $C_i\in \mathbb{R}^{p_i\times n}$,
for $i=1, ..., N$. Let
$
\left[
\begin{matrix}
Q_j & S_j\\
S_j^* & R_j
\end{matrix}
\right]\in \sym^{m+n}
$ for $j=0, ..., M$,  
and $R_j\in \sym_{+}^m$  for $0 = 1, ..., M$.
Then, the set of quadratic matrix inequalities 
\begin{equation}
\begin{aligned}
\left[
\begin{matrix}
x\\
KCx
\end{matrix}
\right]^*
\left[
\begin{matrix}
Q_j & S_j\\
S_j^* & R_j
\end{matrix}
\right]
\left[
\begin{matrix}
x\\
KCx
\end{matrix}
\right] \leq 0  ~~~~\forall x, \hspace{3mm} j=0,..., M,
\end{aligned}
\end{equation}
is equivalent to 
\begin{equation}
\begin{aligned}
\left[
\begin{matrix}
Q_j + S_j KC + C^*K^*S_j^* & C^*K^* R_j\\
R_jKC & -R_j
\end{matrix}
\right]
\preceq 0, \hspace{3mm} i=0, ..., M.
\end{aligned}
\end{equation}

\end{thm}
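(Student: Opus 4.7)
The plan is to reduce each quadratic matrix inequality to a linear matrix inequality via a simple congruence transformation, avoiding Schur complement machinery that would require $R_j \succ 0$.

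First I would expand the quadratic form: writing $T = KC$, one has
$$
\left[\begin{matrix} x\\ Tx \end{matrix}\right]^*\!\left[\begin{matrix} Q_j & S_j\\ S_j^* & R_j \end{matrix}\right]\!\left[\begin{matrix} x\\ Tx \end{matrix}\right] = x^*\!\left(Q_j + S_j T + T^*S_j^* + T^* R_j T\right)\! x.
$$
The $\forall x$ scalar inequality is thus equivalent to the quadratic matrix inequality
$$
\Lambda_j := Q_j + S_j KC + C^*K^*S_j^* + C^*K^* R_j KC \preceq 0.
$$
The remaining task is to show that $\Lambda_j \preceq 0$ is equivalent to the stated block LMI.

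The key step is a congruence transformation. I would define the invertible block-triangular matrix
$$
T_0 = \left[\begin{matrix} I & 0\\ KC & I \end{matrix}\right],
$$
and compute $T_0^* M T_0$ for $M=\left[\begin{smallmatrix} Q_j + S_j KC + C^*K^*S_j^* & C^*K^* R_j\\ R_j KC & -R_j \end{smallmatrix}\right]$. A direct multiplication — this is the only real calculation — yields
$$
T_0^* M T_0 = \left[\begin{matrix} \Lambda_j & 0\\ 0 & -R_j \end{matrix}\right].
$$
Since congruence by an invertible matrix preserves negative semidefiniteness, $M \preceq 0$ if and only if the right-hand side is $\preceq 0$, which in turn holds if and only if $\Lambda_j \preceq 0$ and $-R_j \preceq 0$. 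The second condition is automatic from the hypothesis $R_j \in \sym_+^m$, so the LMI is equivalent to $\Lambda_j \preceq 0$, and hence to the original quadratic constraint.

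The main obstacle I anticipate is essentially presentational rather than mathematical: the hypothesis only gives $R_j \succeq 0$, not $R_j \succ 0$, so a naive Schur-complement argument (which would invert $-R_j$) does not apply directly when $R_j$ is singular. The congruence route above circumvents this entirely, since the transforming matrix $T_0$ is invertible regardless of $R_j$; once this is noticed, the proof reduces to one block matrix multiplication, repeated across $j = 0, \ldots, M$.
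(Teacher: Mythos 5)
Your proof is correct, and the block-multiplication at its heart checks out: with $T_0=\left[\begin{smallmatrix} I & 0\\ KC & I\end{smallmatrix}\right]$ one indeed gets $T_0^*MT_0=\mathbf{diag}(\Lambda_j,\,-R_j)$, so $M\preceq 0$ iff $\Lambda_j\preceq 0$ and $R_j\succeq 0$. The paper reaches the same chain of equivalences (scalar inequality for all $x$ $\Leftrightarrow$ $\Lambda_j\preceq 0$ $\Leftrightarrow$ block LMI) but closes the last step by simply citing the Schur complement of $R_j$ in the block matrix. That citation, taken literally, requires $-R_j\prec 0$, i.e.\ $R_j$ invertible, whereas the hypothesis only grants $R_j\in\sym_+^m$; handling a singular $R_j$ via Schur complements would force one into generalized (pseudo-inverse plus range-condition) territory. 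Your congruence sidesteps this entirely because the off-diagonal block $C^*K^*R_j$ already carries a factor of $R_j$, so the eliminating transformation is $\left[\begin{smallmatrix} I & C^*K^*\\ 0 & I\end{smallmatrix}\right]$ rather than the usual $\left[\begin{smallmatrix} I & -B(-R_j)^{-1}\\ 0 & I\end{smallmatrix}\right]$ — no inverse appears. In short: same skeleton as the paper, but your execution is the one that actually proves the theorem under the stated positive-\emph{semi}definiteness assumption, and it is worth keeping the explicit $T_0$ computation for that reason.
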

\begin{proof}
We have the following chain of inequalities:
$$
\left[
\begin{matrix}
x\\
KCx
\end{matrix}
\right]^*
\left[
\begin{matrix}
Q_j & S_j\\
S_j^* & R_j
\end{matrix}
\right]
\left[
\begin{matrix}
x\\
KCx
\end{matrix}
\right] \leq 0
$$
$$
\Updownarrow
$$
$$
\left[
\begin{matrix}
I\\
KC
\end{matrix}
\right]^*
\left[
\begin{matrix}
Q_j & S_j\\
S_j^* & R_j
\end{matrix}
\right]
\left[
\begin{matrix}
I\\
KC
\end{matrix}
\right] \preceq 0
$$

$$
\Updownarrow
$$

$$
Q_j + S_j KC + C^*K^*S_j^*+ C^*K^*R_jKC \preceq 0
$$

$$
\Updownarrow
$$

$$
A=\left[
\begin{matrix}
Q_j + S_j KC + C^*K^*S_j^* & C^*K^* R_j\\
R_jKC & -R_j
\end{matrix}
\right]
\preceq 0,
$$ 

\noindent where the last equivalence follows from taking the Schur complement of 
$R_j$ in $A$ (see \cite{boyd:vandenberghe:2004}). Hence, our optimization problem becomes
\begin{equation}
\begin{aligned}
\left[
\begin{matrix}
Q_j + S_j KC + C^*K^*S_j^* & C^*K^* R_j\\
R_jKC & -R_j
\end{matrix}
\right]
\preceq 0,
\end{aligned}
\end{equation}
for $i=0, ..., M$. This completes the proof.
\end{proof}


\section{Distributed Linear Quadratic Control with Quadratic Constraints}
In this section, we will treat the distributed
linear quadratic  $\mathcal{H}_{2}$ control problem with information
constraints, which can be seen as a dynamic team decision
problem. The idea is to transform the dynamic team problem to a 
static one, and then explore information structures for every time step.
The information structure we will be concerned with is the \textit{partially nested}
information structure which was introduced in \cite{ho:chu}.

\begin{figure}
\label{graph}
	\center
	\includegraphics[width = 1\columnwidth]{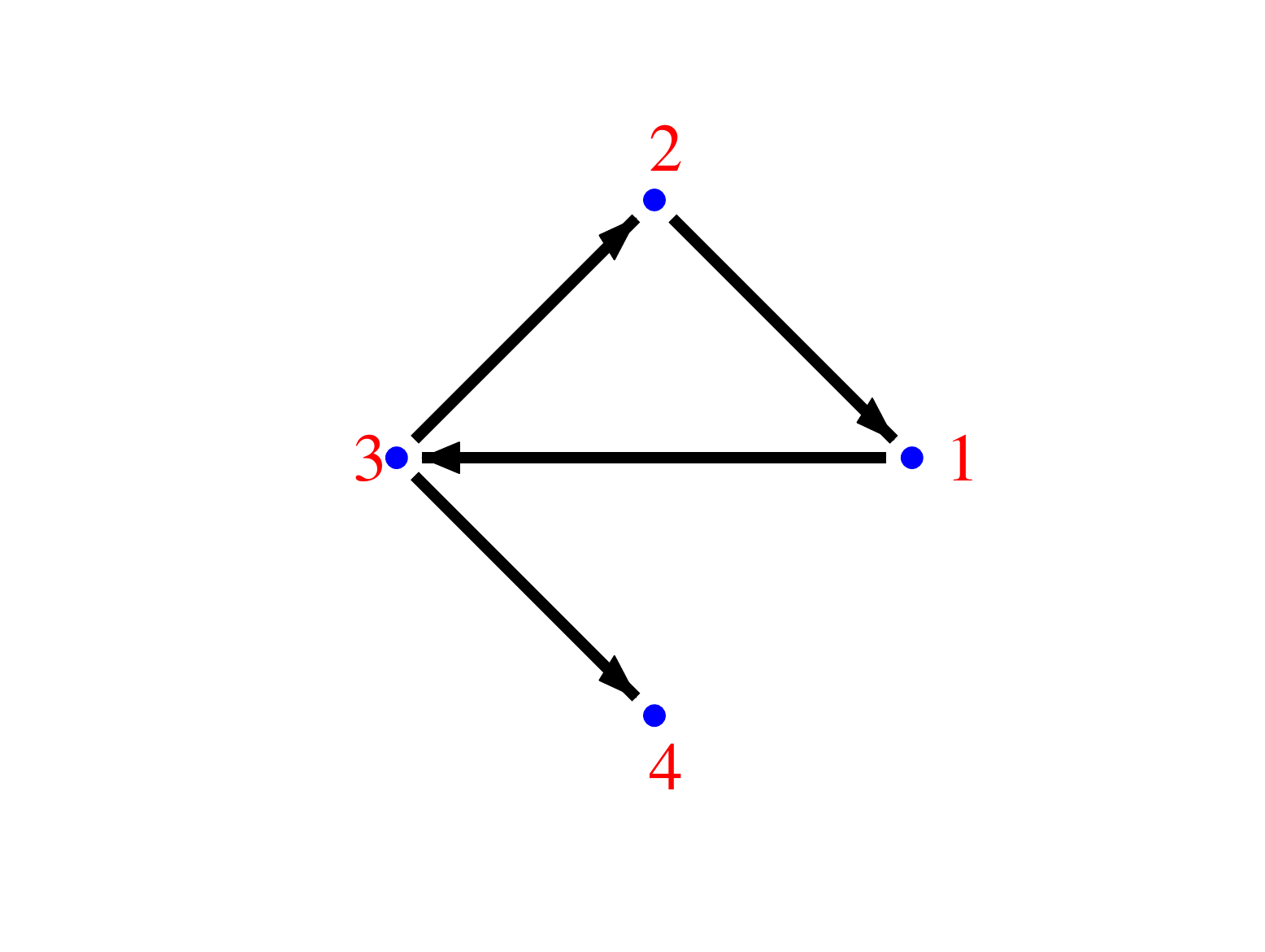}
	\caption{The graph reflects the interconnection structure of the
  dynamics between four systems. The arrow from node 2 to node 1
  indicates that system 1 affects the dynamics of system 2 directly.}
\end{figure} 

Consider an example of four dynamically coupled systems
according to the graph in Figure \ref{graph}. The
equations for the interconnected system are given by  
\begin{equation}
\label{controlsys}
\begin{aligned}
\underbrace{
\left[\begin{matrix}
x_1(k+1)\\
x_2(k+1)\\
x_3(k+1)\\
x_4(k+1)
\end{matrix}\right]}_{x(k+1)} &= 
\underbrace{\left[\begin{matrix}
A_{11} & 0      & A_{13} & 0\\
A_{21} & A_{22} & 0      & 0\\
0      & A_{32} & A_{33} & A_{34}\\
0      & 0      & 0      & A_{44}
\end{matrix}\right]}_{A}
\underbrace{\left[\begin{matrix}
x_1(k)\\
x_2(k)\\
x_3(k)\\
x_4(k)
\end{matrix}\right]}_{x(k)}\\ &+
\underbrace{\left[\begin{matrix}
B_{1}  & 0     & 0      & 0\\
0      & B_{2} & 0      & 0\\
0      & 0     & B_{3}  & 0\\
0      & 0     & 0      & B_{4}
\end{matrix}\right]}_{B}
\underbrace{\left[\begin{matrix}
u_1(k)\\
u_2(k)\\
u_3(k)\\
u_4(k)
\end{matrix}\right]}_{u(k)}+
\underbrace{\left[\begin{matrix}
w_1(k)\\
w_2(k)\\
w_3(k)\\
w_4(k)
\end{matrix}\right]}_{w(k)}.
\end{aligned}
\end{equation}

For instance, the arrow from node 2 to node 1 in the graph
means that the dynamics of system 2 are directly affected by system 1,
which is reflected in the system matrix $A$, where the 
block $A_{21}\neq 0$. On the other hand, 
system 2 does not affect system 1 directly,
which implies that $A_{12}=0$. Because of the ``physical'' distance
between the subsystems, there will be some constraints on the
information available to each node.

The observation of system $i$ at time $k$ is given by
$$y_i(k)=C_ix_i(k),$$ 
where
\begin{equation}
\label{C_structure} 
C_i=\left[\begin{matrix}
C_{i1} & 0      & 0      & 0\\
0      & C_{i2} & 0      & 0\\
0      & 0      & C_{i3} & 0\\
0      & 0      & 0      & C_{i4}
\end{matrix}\right].
\end{equation}
Here, $C_{ij}=0$ if system $i$ does not have access to $y_j(k)$. 
The subsystems could exchange information
about their outputs. Every subsystem recieves the information with
some time delay, that is reflected by the interconnection structure.
Let $\mathbb{I}_i^k$ denote the set of observations
$y_j(n)$ and control signals $u_j(n)$ 
available to node $i$ up to time $k$, $n\leq k$,
$j=1,...,N$.

Consider the following (general) dynamic team decision problem with additional quadratic constraints:  
\begin{equation}
\label{lqg}
\begin{aligned}
\inf_{\mu} \hspace{1mm} & J(u,w)\\
\text{subject to }\hspace{1mm} & x(k+1) = Ax(k)+Bu(k)+w(k)\\
                   & y_i(k) = C_ix(k)\\
                   & u_i(k) = \mu_i:\mathbb{I}_i^k\rightarrow \mathbb{R}^{p_i}\\
                   & \text{for } i=1,..., N\\
                   & \sum_{k=0}^{T-1}
\E
\left[
\begin{matrix}
x(k)\\
u(k)
\end{matrix}
\right]^*
Q_j
\left[
\begin{matrix}
x(k)\\
u(k)
\end{matrix}
\right] \leq \gamma_j\\
& \text{for } j=1,..., M
\end{aligned}
\end{equation}
where 
\begin{equation}
\begin{aligned}
J(u,w) &=\E  x^*(T)Q_fx^*(T)\\
&+\sum_{k=0}^{T-1}
\E
\left[
\begin{matrix}
x(k)\\
u(k)
\end{matrix}
\right]^*
Q
\left[
\begin{matrix}
x(k)\\
u(k)
\end{matrix}
\right] \\ 
\end{aligned}
\end{equation}
$Q_f, Q, Q_j\succ 0$,
$x(k)\in \mathbb{R}^n$, $y_i(k)\in \mathbb{R}^{m_i}$, 
$u_i(k)\in \mathbb{R}^{p_i}$.

Now write $x(k)$ and $y(k)$ as
\begin{equation}
\label{expansion}
\begin{aligned}
x(k) &= A^{t}x(k-t)+\sum_{n=0}^{t-1}A^{n}Bu(k-n-1)+\\
&\hspace{21mm}+\sum_{n=0}^{t-1}A^{n}w(k-n-1),\\
y_i(k) &= C_iA^{t}x(k-t)+\sum_{n=0}^{t-1}C_iA^{n}Bu(k-n-1)+\\
&\hspace{25mm}+\sum_{n=0}^{t-1}C_iA^{n}w(k-n-1).
\end{aligned}
\end{equation}
Note that the summation over $n$ is defined to be zero when $t=0$.

The next result is an extension of \cite{ho:chu} for the case of optimal control with additional quadratic constraints  where it presents a condition on the information structure for which a dynamic problem can be transformed to a static team problem. The condition is known as the partially nested information structure.

\begin{thm}
\label{general_struct}
Consider the optimization problem given by (\ref{lqg}) with the exogenous input $w=(w^*(0)~ w^*(1)~...)^*$ such that $\E\{w^*w\}<\infty$.
The problem is equivalent to a static team problem in the form (\ref{main}) if 
\begin{equation}
\label{t1}
\begin{aligned}
&y_j(k)\in \mathbb{I}_i^k\hspace{1mm} \Rightarrow\\ 
&u_j(k-n-1)\in \mathbb{I}_i^k \hspace{1mm }\text{for } [C_iA^{n}B]_{ij}\neq 0 
\end{aligned}
\end{equation}
for all $n$ such that $0 \leq n< t$, $t=0, 1, 2, ...$.
In particular, the optimal solution to the optimization problem 
given by (\ref{lqg}) is linear in the observations $\mathbb{I}_i^k$ if 
condition (\ref{t1}) is satisfied. 
\end{thm}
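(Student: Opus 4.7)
The plan is to reduce the dynamic problem (\ref{lqg}) to a static team problem of the form (\ref{main}) so that Theorem \ref{gattami} (together with the remark on $N=\infty$ following its proof) applies directly. The vehicle for the reduction is the expansion (\ref{expansion}), which writes every $x(k)$ and every $y_i(k)$ as an affine function of the primitive Gaussian vector $\xi=(x^*(0),\,w^*(0),\,w^*(1),\,\ldots)^*$ and the past controls. Under the hypothesis $\E\{w^*w\}<\infty$, the vector $\xi$ is a (possibly infinite-dimensional) zero-mean Gaussian with finite second moment, exactly as required by Theorem \ref{gattami}.

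The heart of the argument is an induction on $k$ showing that the partially nested condition (\ref{t1}) lets each decision $u_i(k)$ be written as a measurable function $\tilde\mu_i^k(T_i^k\xi)$ for some fixed linear map $T_i^k$. Suppose this has been established for all $(j,k')$ with $k'<k$. Fix an observation $y_j(k')$ entering $\mathbb{I}_i^k$; the expansion
$$y_j(k') = C_jA^{k'}x(0) + \sum_{n=0}^{k'-1}C_jA^{n}Bu(k'-n-1) + \sum_{n=0}^{k'-1}C_jA^{n}w(k'-n-1)$$
is affine in $\xi$ plus a sum of past controls. Condition (\ref{t1}) guarantees that whenever the coefficient of a past control in that sum is nonzero, the corresponding $u$ is itself in $\mathbb{I}_i^k$; by the inductive hypothesis every such $u$ is already a function of $\xi$, so player $i$ can subtract these contributions off. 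Repeating this for every observation in $\mathbb{I}_i^k$ shows that $\mathbb{I}_i^k$ is informationally equivalent to $T_i^k\xi$, and hence that $u_i(k)$ has the required form.

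With every decision represented in this way, stack all $u_i(k)$ into a single vector $\tilde u = \tilde\mu(T\xi)$ with $\tilde\mu$ block-diagonally structured according to the maps $T_i^k$, so that $\tilde\mu$ lies in the class $\mathcal{C}$ of the notation section. Applying (\ref{expansion}) once more rewrites the cost $J(u,w)$ and each of the $M$ integrated quadratic constraints in (\ref{lqg}) as quadratic forms in $(\xi,\tilde u)$, producing an optimization problem in exactly the form (\ref{main}); positivity of the leading $R_0$ block follows from $Q,Q_f\succ 0$. Theorem \ref{gattami} then delivers an optimal linear decision $\tilde u = K\xi$ with $K\in\mathbb{K}$ respecting the induced information structure, and unpacking the construction expresses each $u_i(k)$ as a linear function of $\mathbb{I}_i^k$.

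The main obstacle is the bookkeeping in the induction: one has to verify that condition (\ref{t1}) is \emph{exactly} what is needed to peel off every past-control contribution to each observation available to node $i$ at time $k$, with nothing left over that involves an unknown control. Once this is settled, the rest is a mechanical rewriting, and the additional $M$ quadratic constraints cause no difficulty because Theorem \ref{gattami} was built to accommodate them.
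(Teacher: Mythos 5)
Your proposal is correct and follows essentially the same route as the paper: use the expansion (\ref{expansion}) to subtract the known past-control contributions from each observation under the partially nested condition (\ref{t1}), reduce to a static team problem in the primitive Gaussian variables, and invoke Theorem \ref{gattami}. Your explicit induction establishing that each information set is informationally equivalent to a fixed linear function of the primitive randomness is simply a more careful rendering of the step the paper states directly when it forms $\check{y}_i(k)$.
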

\begin{proof}
Introduce 
$$
\bar{x}=
\left[\begin{matrix}
w(0)\\
w(1)\\
\vdots\\
\end{matrix}\right],\hspace{4mm}
\bar{u}_i=\left[\begin{matrix}
u_i(0)\\
u_i(1)\\
\vdots\\
\end{matrix}\right],\\
$$
Then, we can write the cost function $J(x,u)$ as
$$
\E
\left[
\begin{matrix}
\bar{x}\\
\bar{u}
\end{matrix}
\right]^*
\bar{Q}
\left[
\begin{matrix}
\bar{x}\\
\bar{u}
\end{matrix}
\right].
$$
for some symmetric positive definite matrix $\bar{Q}$. Similarly, the quadratic constraints
$$
\sum_{k=0}^{T-1}
\E
\left[
\begin{matrix}
x(k)\\
u(k)
\end{matrix}
\right]^*
Q_j
\left[
\begin{matrix}
x(k)\\
u(k)
\end{matrix}
\right] \leq \gamma_j
$$
may be written as 
$$
\E
\left[
\begin{matrix}
\bar{x}\\
\bar{u}
\end{matrix}
\right]^*
\bar{Q}_j
\left[
\begin{matrix}
\bar{x}\\
\bar{u}
\end{matrix}
\right]\leq \gamma_j.
$$
for some symmetric positive definite matrices $\bar{Q}_j$, $j=1, ..., M$.
Consider the expansion given by (\ref{expansion}). 
The problem here is that $y_i(k)$ depends on previous values of 
the control signals $u(n)$ for $n=0,..., k-1$. The components
$u_j(k-n-1)$ that $y_i(k)$ depends on are completely determined by
the structure of the matrix $[C_iA^{n}B]_{ij}$.

Now if condition (\ref{t1}) is satisfied, node $i$  
has the information of
$u_j(k-n-1)$ available at time $k$ if the element 
$[C_iA^{n}B]_{ij}\neq 0$.
Then, node $i$ can form the new output measurement
\begin{equation}
\begin{aligned}
\check{y}_i(k) &= y_i(k)-\sum_{n=0}^{k-1}C_iA^{n}Bu(k-n-1)\\
               &= A^{k}x(0)+\sum_{n=0}^{k-1}C_i A^{n}w(k-n-1).
\end{aligned}
\end{equation}  
Let
$$
\bar{y}_i(k)=
\left [\begin{matrix}
\check{y}_i(0)\\
\check{y}_i(1)\\
\vdots
\end{matrix}\right].
$$
With these new variables introduced, the optimization problem
given by equation (\ref{lqg}) reduces to the following static 
team decision problem:
\begin{equation}
\label{reducedstatic} 
\begin{aligned}
\inf_{\mu}\hspace{1mm} & 
\E
\left[\begin{matrix}
\bar{x}\\
\bar{u}
\end{matrix}\right]^*
\bar{Q}\left[\begin{matrix}
\bar{x}\\
\bar{u}
\end{matrix}\right]\\
\text{subject to  } & u_i(k)=\mu_i(\bar{y}_i(k))\\
                    & \text{for } i=1, 2, ... \\
                    & \E
\left[
\begin{matrix}
\bar{x}\\
\bar{u}
\end{matrix}
\right]^*
\bar{Q}_j
\left[
\begin{matrix}
\bar{x}\\
\bar{u}
\end{matrix}
\right]\leq \gamma_j\\
& \text{for } j=1, 2, ..., M
\end{aligned}
\end{equation}
and the optimal solution $\bar{u}$ is 
linear according to Theorem \ref{gattami}. This completes the proof.
\end{proof}


\section{Conclusions}
We have studied multi-objective linear quadratic optimization of team decisions
in both stochastic and deterministic settings. Constrained decision problems tend
to have nonlinear optimal solutions. We have shown that for the Gaussian and worst case scenario settings, respectively, 
linear decisions are in fact optimal, and we can find the respective linear optimal solutions by
solving a semidefinite program. We also showed 
that linear decision are optimal when the number of players in the time is infinite.
Future work will consider
an $\mathcal{S}$-procedure sort of a result, where we want to find decision function $\mu$ 
such that the inequality $J_0(\mu(x),x)\leq 0$ is satisfied if  $J_1(\mu(x),x)\leq 0$, where $J_0, J_1$ are some
quadratic forms in $\mu$ and $x$. However, this is a much harder problem since the search for linear 
a function $\mu(x)$ is not a convex problem, and it's not clear if it can be convexified.
\section{Acknowledgements}

This work was in part supported by the Swedish Research Council.

\bibliography{../../ref/mybib}

\appendix

\section{Game Theory}
Let $J=J(u,w)$ be a functional defined on a product vector space 
$\mathbb{U}\times \mathbb{W}$, to be minimized by 
$u\in U\subset \mathbb{U}$ and maximized by $w\in W\subset \mathbb{W}$, where
$U$ and $W$ are the constrained sets. This defines a zero-sum game,
with kernel $J$, in connection with which we can introduce two values,
the \textit{upper value}
$$
\bar{J}:=\inf_{u\in U}\sup_{w\in W} J(u,w),
$$  
and the \textit{lower value}
$$
\underline{J}:=\sup_{w\in W}\inf_{u\in U} J(u,w).
$$  
Obviously, we have the inequality $\bar{J}\geq \underline{J}$.
If  $\bar{J}= \underline{J}=J_\star$, then $J_\star$ is called the 
\textit{value} of the zero-sum game. Furthermore, if there exists a
pair $(u_\star\in U, w_\star\in W)$ such that 
$$
J(u_\star,w_\star)=J_\star,
$$
then the pair $(u_\star,w_\star)$ is called a (pure-strategy) 
\textit{saddle-point solution}. In this case, we say that the game
admits a \textit{saddle-point} (in pure strategies). Such a saddle-point
solution will equivalently satisfy the so-called 
\textit{pair of saddle-point inequalities}:
$$
J(u_\star,w)\leq J(u_\star,w_\star)\leq J(u,w_\star),\hspace{3mm} \forall u\in \mathbb{U},
 \forall w\in \mathbb{W}.
$$

\begin{prp}
\label{minmaxtheorem}
Consider a two-person zero-sum game on convex finite dimensional
action sets $U_1\times U_2$, defined by the continuous kernel 
$J(u_1,u_2)$. Suppose that $J(u_1,u_2)$ is strictly convex in 
$u_1$ and strictly concave in $u_2$. Suppose that either
\begin{itemize}
\item[($i$)] $U_1$ and $U_2$ are closed and bounded, or
\item[($ii$)] $U_i\subseteq \mathbb{R}^{m_i}$, $i=1,2$, and 
$J(u_1,u_2)\rightarrow \infty$ as $\|u_1\|\rightarrow \infty$,
and $J(u_1,u_2)\rightarrow -\infty$ as $\|u_2\|\rightarrow \infty$.
\end{itemize}  
Then, the game admits a unique pure-strategy saddle-point 
equilibrium.
\end{prp}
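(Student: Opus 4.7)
The plan is to establish existence of a saddle point by a fixed-point argument on single-valued best-response maps, and then extract uniqueness directly from the strict convex-concave structure. First I would handle case (i). With $U_1,U_2$ compact and $J$ continuous, $J(\cdot,u_2)$ attains its minimum over $U_1$ for each $u_2$, and strict convexity in $u_1$ makes this minimizer unique, so $R_1(u_2):=\arg\min_{u_1\in U_1}J(u_1,u_2)$ is a single-valued map; by symmetry the same is true of $R_2(u_1):=\arg\max_{u_2\in U_2}J(u_1,u_2)$. Berge's maximum theorem upgrades these single-valued best-response correspondences to continuous functions, so the product map $T(u_1,u_2):=(R_1(u_2),R_2(u_1))$ is a continuous self-map of the nonempty compact convex set $U_1\times U_2$. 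Brouwer's fixed-point theorem then supplies a point $(u_1^\star,u_2^\star)$ with $u_1^\star=R_1(u_2^\star)$ and $u_2^\star=R_2(u_1^\star)$, which by construction satisfies the saddle-point inequalities $J(u_1^\star,u_2)\leq J(u_1^\star,u_2^\star)\leq J(u_1,u_2^\star)$ for all admissible $u_1,u_2$.

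For case (ii), I would reduce to case (i) via the coercivity hypotheses. Fixing reference points $\bar u_1\in U_1$, $\bar u_2\in U_2$, the coercivity of $J(\cdot,\bar u_2)$ implies that the sublevel set $\{u_1\in U_1:J(u_1,\bar u_2)\leq J(\bar u_1,\bar u_2)\}$ is bounded and, by continuity, closed. Letting $\bar u_2$ vary over a growing compact set and repeating the analogous superlevel argument for $R_2$, one builds compact convex blocks $\tilde U_1\subset U_1$ and $\tilde U_2\subset U_2$ large enough that $R_1(\tilde U_2)\subset \tilde U_1$ and $R_2(\tilde U_1)\subset \tilde U_2$. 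The restricted game on $\tilde U_1\times \tilde U_2$ then falls under case (i), and its saddle point remains a saddle point of the original game because the unrestricted best responses cannot escape the chosen blocks.

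Uniqueness follows immediately from strict convex-concavity. If $(u_1^\star,u_2^\star)$ and $(\tilde u_1,\tilde u_2)$ are both saddle points, the saddle inequalities show they attain a common game value and that $u_1^\star,\tilde u_1$ both minimize $J(\cdot,u_2^\star)$ while $u_2^\star,\tilde u_2$ both maximize $J(\tilde u_1,\cdot)$; strictness of convexity and concavity collapses these extremizers to singletons, forcing $u_1^\star=\tilde u_1$ and $u_2^\star=\tilde u_2$. The main obstacle will be the reduction in case (ii): I need to verify that the invariant compact blocks $\tilde U_i$ can be taken simultaneously convex and large enough to absorb all relevant best responses, which requires a careful quantitative use of coercivity uniform over the opponent's action set. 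An essentially equivalent alternative route is to first invoke Sion's minimax theorem to get $\bar J=\underline J$ and then use coercivity to secure attainment of the inf and sup, but the fixed-point path is more direct given the strong convexity/concavity assumptions already in hand.
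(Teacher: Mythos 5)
The paper does not actually prove this proposition---it simply cites \cite{basar:olsder:1999}---and the argument you give is essentially the standard one found there (single-valued reaction curves plus a fixed-point theorem), so you are supplying the omitted proof rather than taking a different route. Your case (i) is correct: strict convexity/concavity makes the best-response maps $R_1,R_2$ single-valued, Berge's maximum theorem (with the constant constraint correspondences $U_1$, resp.\ $U_2$) upgrades them to continuous functions, Brouwer applies to $T$ on the nonempty compact convex product, and a fixed point of $T$ satisfies the saddle-point inequalities by construction. Your uniqueness argument is also correct: for two saddle points the chain $J(u_1^\star,u_2^\star)\le J(\tilde u_1,u_2^\star)\le J(\tilde u_1,\tilde u_2)\le J(u_1^\star,\tilde u_2)\le J(u_1^\star,u_2^\star)$ forces equality throughout, so $u_1^\star$ and $\tilde u_1$ both minimize $J(\cdot,u_2^\star)$ and strict convexity collapses them, symmetrically for the second player.

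The one genuine gap is the case (ii) reduction, which you flag yourself. As sketched, the construction of the invariant blocks is circular: $\tilde U_1$ must absorb $R_1(\tilde U_2)$ while $\tilde U_2$ must absorb $R_2(\tilde U_1)$, and the stated coercivity of $J(\cdot,u_2)$ is pointwise in $u_2$, so nothing yet guarantees this iteration stabilizes. The standard way to close it avoids the circularity: fix $\bar u_1\in U_1$, $\bar u_2\in U_2$ and set $M:=\sup_{u_2}J(\bar u_1,u_2)<\infty$ (finite because a concave function tending to $-\infty$ is bounded above) and $m:=\inf_{u_1}J(u_1,\bar u_2)>-\infty$. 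For the game truncated to $U_i\cap\bar B(0,R)$ with $R\ge\max(\|\bar u_1\|,\|\bar u_2\|)$, case (i) gives a saddle point $(u_1^R,u_2^R)$, and the restricted saddle inequalities yield $J(u_1^R,\bar u_2)\le J(u_1^R,u_2^R)\le J(\bar u_1,u_2^R)\le M$ and likewise $J(\bar u_1,u_2^R)\ge J(u_1^R,u_2^R)\ge J(u_1^R,\bar u_2)\ge m$; coercivity of $J(\cdot,\bar u_2)$ and of $-J(\bar u_1,\cdot)$ then confines $(u_1^R,u_2^R)$ to a bounded set independent of $R$. Taking $R$ strictly larger than that bound, the saddle inequalities extend to all of $U_1\times U_2$ by convexity along segments: for $u_1\notin\bar B(0,R)$ and small $t>0$ the point $u_1'=(1-t)u_1^R+tu_1$ lies in $U_1\cap\bar B(0,R)$, so $J(u_1^R,u_2^R)\le J(u_1',u_2^R)\le(1-t)J(u_1^R,u_2^R)+tJ(u_1,u_2^R)$, whence $J(u_1^R,u_2^R)\le J(u_1,u_2^R)$, and symmetrically for $u_2$. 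With that insertion your proof is complete; note also that in case (ii) the sets $U_i$ must be taken closed (e.g.\ $U_i=\mathbb{R}^{m_i}$) for the extrema to be attained.
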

\begin{proof}
 See \cite{basar:olsder:1999}, pp. 177.
\end{proof}

\end{document}